\newtheorem{theorem}{Theorem}[section]
\newtheorem{lemma}[theorem]{Lemma}
\newtheorem{proposition}[theorem]{Proposition}
\theoremstyle{definition}
\newtheorem{definition}[theorem]{Definition}
\newtheorem{example}[theorem]{Example}
\theoremstyle{remark}
\newtheorem{remark}[theorem]{Remark}
\numberwithin{equation}{section}
\def\id{{\bf 1}\!\!{\rm I}}
\def\A{{\mathcal A}_t (\mathcal T) }
\def\bn{{\mathbb N}}
\def\br{{\mathbb R}}
\def\A{{\mathcal A}_t (\mathcal T) }
\def\s{\sigma}
\def\l{\lambda}
\def\t{\tau}
\def\f{\varphi}
\def\a{\alpha}
\def\b{\beta}
\def\m{\mu}
\def\d{\delta}
\def\g{\gamma}
\def\cf{{\mathcal F}}
\def\wb{{\mathbf{w}}}
\def\ca{\mathcal{ A}}
\def\cf{\mathcal{ F}}
\def\ck{\mathcal{K}}
\def\ct{\mathcal{ T}}
\def\cw{\mathcal {W}}
\begin{document}
\setcounter{page}{1}

\title[Residualities and uniform ergodicities]{Residualities and uniform ergodicities of Markov semigroups}

\author[Nazife Erkur\c{s}un-\"Ozcan, Farrukh Mukhamedov]{Nazife Erkur\c{s}un-\"Ozcan$^{1,*}$ and Farrukh Mukhamedov$^{2,*}$}

\address{$^{1}$ Department of Mathematics, Faculty of Science, Hacettepe University, Ankara, 06800, Turkey.}
\email{{erkursun.ozcan@hacettepe.edu.tr}}

\address{$^{2}$ Department of Mathematical Sciences, College of Science, United Arab Emirates University, 15551, Al-Ain, UAE.} \email{{far75m@gmail.com; farrukh.m@uaeu.ac.ae}}


\subjclass[2020]{Primary 47A35; Secondary  60J10, 28D05.}

\keywords{uniform $P$-ergodicity; mean ergodicity; $C_0$-Markov semigroup;
 ergodicity coefficient; projection; residuality}

\date{Received: xxxxxx; Revised: yyyyyy; Accepted: zzzzzz.
\newline \indent $^{*}$ Corresponding author}

\begin{abstract}
The first goal of the present paper is to study residualities of the set of uniform $P$-ergodic Markov semigroups defined on abstract state spaces by means of a generalized Dobrushin ergodicity coefficient. In the last part of the paper, we explore uniform mean ergodicities of Markov semigroups.
\end{abstract} \maketitle


\section{Introduction}

It is known that Markov operators and semigroups appear in many branches of applies sciences. For instance, Boltzmann equation and its simplified
version Tjon-Wu equation generate a nonlinear Markov semigroups. In possible treatments, it is convenient to investigate Markov operators on densities, sometimes it is more convenient to consider Markov operators on measures \cite{LM94}. Notice that semigroups of Markov operators can be generated by partial differential equations (transport equations) \cite{RPT02}.

Several papers have delved into the exploration of general characteristics associated with Markov operators, as evidenced by works such as \cite{B81, BK, Iw, R97}. Conventionally, the dominant generic characteristic ascribed to Markov operators/semigroups is the notion of asymptotic stability, wherein all elements exhibiting this trait are considered residual \cite{LM92}. A recent development in this context is presented in \cite{Kuna}, which establishes a similar outcome for Markov semigroups operating on the set of density operators within a Hilbert space. In this context, "residuality" refers to a subset of a complete metric space, with its complement capable of being expressed as a countable union of nowhere dense sets.

Recently, an extended Dobrushin ergodicity coefficient $\delta_P(T)$ has been introduced for Markov operators, acting on an abstract state space, in relation to a projection $P$ \cite{MA}. Additionally, the investigation of a generalized Dobrushin ergodicity coefficient $\delta_P(T)$ for Markov $C_0$-semigroups on abstract state spaces with respect to a projection $P$ has been undertaken in \cite{EM-JMS}. In the realm of open quantum systems, \cite{SU22} explores invariant subspaces in the presence of symmetries, which are associated with projections $P$ of the corresponding semigroups (refer also to \cite{CG21}). When the projection $P$ is a rank-one projection, the study of the uniform convergence of irreducible Markov semigroups has been explored in \cite{AFP21,FR1,GM22}. Furthermore, the Dobrushin ergodicity coefficient $\delta(T)$ has been employed to examine the asymptotic stability of Markov $C_0$-semigroups on abstract state spaces in \cite{EM2018, EM2018Q}. Particularly noteworthy are the perturbation results for $C_0$-semigroups of Markov operators obtained in \cite{EM2020,EM-JMS}. It is important to note that within this abstract framework, both classical and quantum cases can be considered as specific instances \cite{Alf,E}.

It is noted that the averages of operators appear in several fields of
mathematics, theoretical physics, and other fields of science \cite{AFP21,BFH20,GOSS22,RPT02}. In \cite{EO1, EO2}, convergence of averages are deeply studied on different spaces. 

In the present paper, we study residualities of the set of uniform $P$-ergodic Markov semigroups defined on abstract state spaces by means of a generalized Dobrushin ergodicity coefficient. Furthermore, we are going to explore the uniformly mean ergodicity of Markov semigroups in terms of the generalized Dobrushin's ergodicity coefficient without irreducibility condition.

To best of our knowledge the obtained results are new even in the classical setting. Namely, if $X$ is taken as either $L^1(E,\m)$ or a predual of a von Neumann algebra, then all the established results shed new light into the classical and non-commutative settings, respectively.

\section{Preliminaries}

In this section, essential definitions and findings pertaining to abstract state spaces are revisited.

 Let $X$ denote an ordered vector space equipped with a generating positive cone $X_+$ (i.e., $X = X_+ - X_+$). Suppose that $X_+$ has a base denoted as $\mathcal{K}$, where 
 $$\mathcal{K} = \{x \in X_+ : f(x) = 1\},$$ 
 for a strictly positive linear functional $f$ on $X$. Define $U := \mathcal{K} \cup (-\mathcal{K})$, and introduce a seminorm as follows:
$$
\|x\|_{\ck}=\inf\{\l\in\br_+:\ x\in\l U\}.
$$
In this context, one finds that $\mathcal{K} = \{x \in X_+ : \|x\|_{\mathcal{K}} = 1\}$, and $f(x) = \|x\|_{\mathcal{K}}$ for $x \in X_+$. Assuming the seminorm becomes a norm and $X$ is a complete space with respect to $\|\cdot\|_{\mathcal{K}}$, and $X_+$ is closed, the tuple $(X, X_+, \mathcal{K}, f)$ is termed an \textit{abstract state space}. In this scenario, $\mathcal{K}$ is identified as a closed face of the unit ball of $X$, and $U$ encompasses the open unit ball of $X$. It's noteworthy that if $U$ is \textit{radially compact} (as defined in \cite{Alf}), meaning that the intersection of $\ell$ and $U$ forms a closed and bounded segment for every line $\ell$ passing through the origin of $X$, then $\|\cdot\|_{\mathcal{K}}$ becomes a norm. The radial compactness condition is equivalent to $U$ coinciding with the closed unit ball of $X$. In the latter case, $X$ is called as a \textit{strong abstract state space}. Throughout, we employ $\|\cdot\|$ in place of $\|\cdot\|_{\mathcal{K} }$. For a deeper comprehension of the distinction between a strong abstract state space and a broader class of base norm spaces, the reader is directed to \cite{JP21, Yo}.

It is worth noting that fundamental examples of abstract state spaces encompass classical $L_1$-spaces and the space of density operators operating on certain Hilbert spaces (see, for details \cite{Alf, WN}).

In the sequel, for the sake of simplicity, always $X$ stands for an abstract state space. 
%
%
%

In the current paper, we are going to consider general abstract state spaces for
which the convex hull of the base $\ck$ and $-\ck$ is not supposed to be radially compact.

A linear operator $T:X\to X$ is
called \textit{positive}, if $Tx\geq 0$ whenever $x\geq 0$. If a positive linear operator $T: X \to X$ satisfies $T(\mathcal{K}) \subset \mathcal{K}$, it is referred to as a "Markov" operator. It is evident that $\|T\| = 1$, and its adjoint mapping $T^*: X^* \to X^*$ operates in the ordered Banach space $X^*$ with the unit being $f$. Furthermore, it holds that $T^*f = f$.



The following concept represents a generalized form of Dobrushin's ergodicity coefficient (see \cite{MA}), defined as follows: .

\begin{definition}
Given a linear bounded operator $T:X\to X$ one defines
\begin{equation}
\label{db} \d_P (T)=\sup_{x\in N_P,\ x\neq 0}\frac{\|Tx\|}{\|x\|}
\end{equation}
where \begin{equation}
\label{NN}  N_P=\{x\in X: \ P(x)=0\}.
\end{equation}
If $P=I$, we put $\d_P (T)=1$. The quantity $\d_P (T)$ is called the \textit{generalized Dobrushin ergodicity coefficient} of $T$ with respect to $P$.
\end{definition}

\begin{remark}
Assume that $P$ is a one-dimensional projection, i.e. $Px=f(x)y_0$ ($y_0 \in \ck$), then the Dobrushin ergodicity coefficient $\delta_P(T)$ for $T$ is equal to the classical Dobrushin ergodicity coefficient $\delta(T)$ studied in \cite{C, D, M01}. It's noteworthy that in the case where $X^*$ evolves into a von Neumann algebra, a similar coefficient was introduced in \cite{M}.
\end{remark}

Let us use the notation $\Sigma (X)$ for the set of all Markov operators on $X$ and $\Sigma_P (X)$ for the set of all Markov operators $T$ on $X$ with $PT =TP$.

The next result establishes several properties of the generalized Dobrushin
ergodicity coefficient.

\begin{theorem}\label{Dob}\cite{MA}
Let $X$ be an abstract state space, $P$ be a projection on $X$ and let  $T,S  \in \Sigma (X)$. Then the following
statements are true:
\begin{itemize}
\item[(i)] $0 \leq \d_P (T) \leq 1$;
\item[(ii)] $|\d_P (T) - \d_P (S)| \leq \d_P (T-S) \leq \left\| T - S\right\|$;
\item[(iii)] if $H: X\to X$ is a bounded linear
operator with  $HP = PH$, then
$$
\d_P (TH) \leq \d_P (T) \left\| H \right\| ;
$$
\item[(iv)] if $H: X\to X$ is a bounded  linear
operator with $PH =0$, then
$$
\left\| TH\right\| \leq \d_P (T) \left\| H \right\| ;
$$
\item[(v)] if $S \in \Sigma_P (X)$, then
$$
\d_P (TS) \leq \d_P (T) \d_P (S).
$$
\end{itemize}
\end{theorem}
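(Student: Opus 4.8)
The plan is to prove the five statements of Theorem~\ref{Dob} in order, since the later parts will naturally lean on the earlier ones and on the basic structural facts about the generalized Dobrushin coefficient. Throughout, the workhorse is the defining formula $\d_P(T)=\sup_{x\in N_P,\,x\neq 0}\|Tx\|/\|x\|$, which says exactly that $\d_P(T)$ is the operator norm of the restriction $T|_{N_P}$ of $T$ to the subspace $N_P=\ker P$; once this is recognized, much of the theorem reduces to standard operator-norm estimates applied on this subspace. For part (i), the lower bound $\d_P(T)\ge 0$ is immediate from the definition, while the upper bound $\d_P(T)\le 1$ follows because $T\in\Sigma(X)$ is Markov and hence $\|T\|=1$, so $\|Tx\|\le\|x\|$ for every $x$, in particular for $x\in N_P$; taking the supremum gives $\d_P(T)\le 1$.

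For part (ii), I would first establish the right-hand inequality $\d_P(T-S)\le\|T-S\|$ the same way as in (i): on $N_P$ one has $\|(T-S)x\|\le\|T-S\|\,\|x\|$, and the supremum over nonzero $x\in N_P$ is bounded by $\|T-S\|$ (indeed $\d_P(\cdot)$ of any bounded operator is dominated by its norm). The left-hand inequality $|\d_P(T)-\d_P(S)|\le\d_P(T-S)$ I expect to get from the reverse triangle inequality inside the supremum: for each fixed $x\in N_P$ with $\|x\|=1$ one has $\|Tx\|\le\|Sx\|+\|(T-S)x\|\le\d_P(S)+\d_P(T-S)$, so taking the supremum yields $\d_P(T)\le\d_P(S)+\d_P(T-S)$, and by symmetry the reversed inequality, giving the absolute value bound. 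The only subtlety is that these are suprema rather than norms of well-defined restrictions a priori, but since $\d_P$ is literally $\|\cdot|_{N_P}\|$, the subadditivity $\d_P(T-S)\ge\d_P(T)-\d_P(S)$ is just subadditivity of a seminorm.

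For parts (iii) and (iv), the key mechanism is controlling where $H$ sends $N_P$. In (iv), the hypothesis $PH=0$ means $\mathrm{Ran}(H)\subseteq N_P$, so for \emph{every} $x\in X$ we have $Hx\in N_P$ and therefore $\|THx\|\le\d_P(T)\,\|Hx\|\le\d_P(T)\,\|H\|\,\|x\|$; taking the supremum over unit vectors $x$ gives $\|TH\|\le\d_P(T)\,\|H\|$. In (iii), the hypothesis $HP=PH$ guarantees that $H$ maps $N_P$ into itself: if $P(x)=0$ then $P(Hx)=H(Px)=0$, so $Hx\in N_P$; hence for $x\in N_P$ we get $\|THx\|\le\d_P(T)\,\|Hx\|\le\d_P(T)\,\|H\|\,\|x\|$, and taking the supremum over nonzero $x\in N_P$ yields $\d_P(TH)\le\d_P(T)\,\|H\|$. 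The main thing to verify carefully is precisely this invariance of $N_P$ under a commuting operator, which is where the commutation hypothesis is used.

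Part (v) is the part I expect to be the genuine obstacle, since $S$ being Markov does not by itself send $N_P$ into $N_P$. Here the extra hypothesis $S\in\Sigma_P(X)$, i.e.\ $PS=SP$, is exactly what rescues the argument: it forces $S$ to leave $N_P$ invariant by the same computation as in (iii) (if $Px=0$ then $PSx=SPx=0$). Thus for $x\in N_P$ we have $Sx\in N_P$ with $\|Sx\|\le\d_P(S)\,\|x\|$ (applying the definition of $\d_P(S)$ to $x\in N_P$), and then $\|TSx\|\le\d_P(T)\,\|Sx\|\le\d_P(T)\,\d_P(S)\,\|x\|$ using that $Sx\in N_P$ so the definition of $\d_P(T)$ applies to it. Taking the supremum over nonzero $x\in N_P$ gives $\d_P(TS)\le\d_P(T)\,\d_P(S)$. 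The delicate point, and the reason (v) needs $S\in\Sigma_P(X)$ rather than merely $S\in\Sigma(X)$, is that the submultiplicativity chain requires $Sx$ to remain in $N_P$ so that we may feed it into the defining supremum for $\d_P(T)$; without the commutation relation $PS=SP$ this invariance can fail and the bound need not hold.
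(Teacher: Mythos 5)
Your proposal is correct, and it is essentially the argument of the cited source: the paper itself gives no proof of Theorem~\ref{Dob} (it is quoted from \cite{MA}), and the proofs there proceed exactly as you do, reading $\d_P(T)$ as the norm of $T$ restricted to $N_P$, using $\|T\|=1$ for (i), the triangle inequality on $N_P$ for (ii), and the invariance $H(N_P)\subseteq N_P$ (respectively $\mathrm{Ran}(H)\subseteq N_P$, $S(N_P)\subseteq N_P$) forced by $HP=PH$, $PH=0$, $PS=SP$ for (iii)--(v). Your closing remark correctly isolates why (v) needs $S\in\Sigma_P(X)$ rather than mere Markovianity, which is the only genuinely non-routine point of the theorem.
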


\begin{remark}\label{SS1}
If $X$ is a strong abstract state space, then due to \cite[Proposition 3.9]{MA} one has
\begin{equation}\label{SS11}
\d_P (T) = \frac{1}{2} \sup \{\left\| Tu -
Tv\right\| : u,v \in \ck u-v \in N_P\}.
\end{equation}
\end{remark}

The main object in this paper is a \textit{$C_0$-semigroup} on a Banach space $ X $, i.e.  a map $ {\displaystyle T_t :\mathbb {R} _{+}\to L(X)}$ such that
\begin{itemize}
\item[(i)] ${\displaystyle T_0=I} $,   identity operator on ${\displaystyle X}$;
\item[(ii)] for all $t,s\geq 0$ one has $T_{t+s}=T_tT_s $;
\item[(iii)] for every $x_{0}\in X$ one has $\|T_t x_{0}-x_{0}\|\to 0$, as ${\displaystyle t\downarrow 0} $.
\end{itemize}


A $C_0$-semigroup $\mathcal T=(T_t)_{t\geq 0}$ is called \textit{$C_0$-Markov semigroup} if for every $t\in\br_+$, the operator $T_t$ is a Markov operator. Denote
$$
\Xi (X ) = \{ \mathcal T = (T(t))_{t\geq 0} : \mathcal T \  \text{is a} \ C_0-\text{Markov semigroup}\}.
$$

An
element $x_0\in X$ is called a \textit{fixed point}  or \textit{stationary point} of  $\mathcal
T=(T_t)_{t\geq 0}$ if  $T_tx_0=x_0$ for all $t\in\br_+$. The basic structures of positive semigroups defined on
ordered Banach spaces are presented in \cite{BR}.


Let $P$ be a projection of $X$ and $\mathcal T=(T_t)_{t\geq 0}$ be a Markov semigroup.
We write $\mathcal T\in\Xi_P (X)$ if for each $t\in\br_+$ one has $T_t\in \Sigma_P(X)$. This is equivalent to $T_tP=PT_t$ for all $t\geq 0$.


\section{Residuality of Uniform-$P$-ergodic Markov semigroups}

In \cite{EM-JMS}, uniformly $P$-ergodicity of $C_0$-Markov semigroups is studied. 

\begin{definition} A $C_0$-Markov semigroup  $\mathcal T=(T_t)_{t\geq 0}$ defined on $X$ is called
\begin{enumerate}
\item[(i)] {\it uniformly $P$-ergodic} if there exists  $P$ such that
$$
\lim_{t\to\infty}\|T_t - P \|=0;
$$
\item[(ii)] if $P$ is a one-dimensional projection, then uniformly $P$-ergodic semigroup is called  {\it uniformly ergodic}.
\end{enumerate}
\end{definition}

\begin{theorem}\label{Dobb} \cite{EM-JMS}
Let $X$ be an abstract state space. Assume that $P$ is a Markov projection  and $\mathcal T =(T_t)_{t\geq 0}$
is a $C_0$-Markov semigroup on $X$. Then the following statements are equivalent:
\begin{itemize}
  \item[(i)] $\mathcal T$ is uniformly $P$-ergodic;
  \item[(ii)] There is a strongly-continuous semigroup $(Q_t)_{t\geq 0}$ with $Q_0=I-P$ on $X$ such that $T_t=P+Q_t$ and $PQ_t=Q_tP=0$ (for all $t\geq 0$). There is $t_0>0$ such that $\|Q_{t_0}\|<1$. Moreover,
\begin{equation}\label{Qt}
\d_P(T_{t})\leq \|Q_{t}\|\leq 2\d_P(T_{t}), \ \ \textrm{for all} \ \ t\geq 0.
\end{equation}
  \item[(iii)] There exists $t_0$ such that $\d_P (T_{t_0}) < 1$ and $PT_t=T_tP=P$ for all $t\geq 0$. Moreover,
one finds positive constants $C, \a, t_0 \in\br$ with
\begin{equation}\label{Dobrushin}
\left\| T_t - P \right\| \leq C e^{-\a t}, \,\,\,\,\, \forall
t\geq t_0.
\end{equation}
\end{itemize}
\end{theorem}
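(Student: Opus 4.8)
The plan is to prove the cyclic chain $(i)\Rightarrow(ii)\Rightarrow(iii)\Rightarrow(i)$, with the single auxiliary object $Q_t := T_t - P$ carrying most of the argument; the two-sided estimate \eqref{Qt} and the exponential bound \eqref{Dobrushin} are not treated as separate claims but established \emph{en route}, in the implications where they are naturally available. The conceptual point is that once $P$ is shown to satisfy $PT_t=T_tP=P$, the family $(Q_t)$ is a $C_0$-semigroup transverse to $P$, and uniform $P$-ergodicity is exactly exponential stability of $(Q_t)$. Throughout I rely on $\|T_t\|=1$ and $\|P\|=1$ for Markov operators, and on the contraction/submultiplicativity properties recorded in Theorem~\ref{Dob}.

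For $(i)\Rightarrow(ii)$ the first task is to extract $PT_t=T_tP=P$ from uniform convergence. Using $T_{s+t}=T_sT_t$, $\|T_t\|=1$, and $\|T_s-P\|\to 0$, I would let $s\to\infty$ in $T_{s+t}=T_sT_t$ and in $T_{t+s}=T_tT_s$ to obtain $P=PT_t$ and $P=T_tP$ respectively, while $P^2=P$ holds since $P$ is a projection. Setting $Q_t=T_t-P$ then gives $PQ_t=Q_tP=0$ and $Q_0=I-P$, and the semigroup law $Q_tQ_s=T_{t+s}-P=Q_{t+s}$ follows by expanding and using $T_tP=PT_s=P^2=P$; strong continuity is inherited from $(T_t)$ since $Q_tx-Q_0x=T_tx-x$. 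Some $t_0$ with $\|Q_{t_0}\|<1$ exists because $\|Q_t\|=\|T_t-P\|\to 0$. The genuinely computational point is \eqref{Qt}: for $x\in N_P$ one has $Q_tx=T_tx$, giving $\d_P(T_t)\le\|Q_t\|$; conversely, writing $x=Px+(I-P)x$ and using $Q_tP=0$ gives $Q_tx=T_t(I-P)x$ with $(I-P)x\in N_P$, whence $\|Q_tx\|\le\d_P(T_t)\|(I-P)x\|\le 2\d_P(T_t)\|x\|$, the factor $2$ coming from $\|P\|=1$.

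For $(ii)\Rightarrow(iii)$ the identities $PT_t=T_tP=P$ are immediate from $T_t=P+Q_t$ together with $PQ_t=Q_tP=0$, and $\d_P(T_{t_0})\le\|Q_{t_0}\|<1$ follows from the left inequality in \eqref{Qt}. To reach \eqref{Dobrushin} I would exploit $\|Q_{t_0}\|<1$: the semigroup relation gives $Q_{nt_0}=Q_{t_0}^{\,n}$, hence $\|Q_{nt_0}\|\le\|Q_{t_0}\|^{\,n}$; for arbitrary $t\ge t_0$ write $t=nt_0+r$ with $0\le r<t_0$, bound $\|Q_r\|$ by $M:=\sup_{0\le r\le t_0}\|Q_r\|<\infty$ (finite by strong continuity on a compact interval), and combine $\|Q_t\|\le\|Q_{t_0}\|^{\,n}\,M$ with $n\ge t/t_0-1$ to get $\|T_t-P\|=\|Q_t\|\le Ce^{-\a t}$, where $\a=-\ln\|Q_{t_0}\|/t_0>0$. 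Finally, for $(iii)\Rightarrow(i)$ one may read off convergence directly from \eqref{Dobrushin}, or, starting only from $\d_P(T_{t_0})<1$ and $PT_t=T_tP=P$, note $T_s\in\Sigma_P(X)$ and apply submultiplicativity (Theorem~\ref{Dob}(v)) to get $\d_P(T_{nt_0})\le\d_P(T_{t_0})^{\,n}$, then Theorem~\ref{Dob}(iii) for the remainder to get $\d_P(T_t)\le\d_P(T_{t_0})^{\,n}$; the right inequality $\|Q_t\|\le 2\d_P(T_t)$ then forces $\|T_t-P\|\to 0$.

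I expect the main obstacle to be the $(i)\Rightarrow(ii)$ step, specifically the limit passage securing $PT_t=T_tP=P$ and the clean verification that $(Q_t)$ is a bona fide $C_0$-semigroup transverse to $P$; once that structure is in place, everything downstream is a mechanical combination of the semigroup law with the contraction and submultiplicativity properties of $\d_P$. A secondary care point is the uniform boundedness of the remainder $Q_r$ on $[0,t_0]$, which is precisely where strong (rather than merely pointwise) continuity is invoked.
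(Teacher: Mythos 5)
Your proof is correct. Note that the paper itself does not prove this theorem---it is stated as a citation from \cite{EM-JMS}---and your argument (the splitting $Q_t=T_t-P$, extracting $PT_t=T_tP=P$ by letting $s\to\infty$ in the semigroup law, the two-sided comparison $\d_P(T_t)\leq\|Q_t\|\leq 2\d_P(T_t)$ via $Q_tx=T_t(I-P)x$ with $(I-P)x\in N_P$, and geometric decay of $Q_{nt_0}=Q_{t_0}^n$) is essentially the standard route taken in that reference. One small simplification: on $[0,t_0]$ you do not need strong continuity plus uniform boundedness to control the remainder, since Markovianity gives $\|Q_r\|=\|T_r-P\|\leq\|T_r\|+\|P\|=2$ directly; and your observation that the implication $(iii)\Rightarrow(i)$ already follows from $\d_P(T_{t_0})<1$ and $PT_t=T_tP=P$ alone (via Theorem~\ref{Dob}(iii),(v) and the right inequality in \eqref{Qt}), without invoking the exponential bound \eqref{Dobrushin}, is a valid and clean refinement.
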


Recall that for a bounded linear operator $T$ on $X$ one denoted by $\s(T)$ its spectrum. Then the spectral radius of $T$ is defined to be $r(T)= \sup_{\lambda\in\sigma (T)} |\lambda |$. Due to \cite[p. 687]{EM21}, we have the next fact.

\begin{lemma} \label{LS}
Let $P$ be a Markov projection on $X$. Assume that $\mathcal T =(T_t)$ is uniformly $P$-ergodic $C_0$-Markov semigroup on $X$.
Then
$$
\lim_{n\to\infty}(\d_P (T_n))^{1/n} = \lim_{n\to\infty}(\d_P ((T_1)^n))^{1/n}= r(T_1-P).
$$
\end{lemma}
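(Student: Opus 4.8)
The plan is to reduce the whole statement to the bracketing inequality \eqref{Qt} together with Gelfand's spectral radius formula. First I would dispose of the leftmost equality, which is essentially tautological: since $\mathcal T$ is a $C_0$-semigroup, the semigroup law $T_{s+t}=T_sT_t$ forces $T_n=(T_1)^n$ for every integer $n\geq 1$. Hence $\d_P(T_n)=\d_P((T_1)^n)$ for all $n$, and the first two limits coincide term by term; only the identification with $r(T_1-P)$ carries content.

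Next I would invoke the decomposition furnished by Theorem \ref{Dobb}(ii): write $T_t=P+Q_t$ with $PQ_t=Q_tP=0$ for all $t\geq 0$. Because $P^2=P$ and $P$ annihilates each $Q_t$ on both sides, substituting this into $T_{s+t}=T_sT_t$ gives $(P+Q_s)(P+Q_t)=P+Q_sQ_t$, so that $Q_{s+t}=Q_sQ_t$; in particular $Q_n=(Q_1)^n$. Since $Q_1=T_1-P$, this reads precisely $T_n-P=(T_1-P)^n$, which is the algebraic fact driving the argument.

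The core step is then to compare $\d_P(T_n)$ with $\|Q_n\|=\|(T_1-P)^n\|$ through \eqref{Qt}. From $\d_P(T_n)\leq\|Q_n\|\leq 2\,\d_P(T_n)$, taking $n$-th roots yields the sandwich
$$
2^{-1/n}\,\big\|(T_1-P)^n\big\|^{1/n}\ \leq\ \big(\d_P(T_n)\big)^{1/n}\ \leq\ \big\|(T_1-P)^n\big\|^{1/n}.
$$
As $n\to\infty$ the right-hand side tends to $\lim_n\|(T_1-P)^n\|^{1/n}=r(T_1-P)$ by Gelfand's formula, and the left-hand side tends to the same limit since $2^{-1/n}\to 1$. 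By the squeeze theorem the middle term converges to $r(T_1-P)$ as well, which is exactly the asserted identity.

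I do not anticipate a genuine obstacle: the argument is a clean squeeze once the two ingredients are in place. The only point demanding care is the purely algebraic verification that $Q_n=(T_1-P)^n$, which rests on the idempotence of $P$ and the orthogonality relations $PQ_t=Q_tP=0$. One should also note that the degenerate case $P=I$ is tacitly excluded, since there the convention $\d_P\equiv 1$ would be incompatible with the right-hand side $r(T_1-P)=0$; throughout one assumes $P\neq I$, so that the semigroup is genuinely nontrivial.
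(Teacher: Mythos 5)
Your argument is correct, and it is worth noting that the paper itself supplies no proof of Lemma \ref{LS} at all: it is imported wholesale by the citation to \cite[p.~687]{EM21}, where the corresponding statement is established for the discrete iterates of a single uniformly $P$-ergodic Markov operator. What your route buys is a self-contained derivation using only ingredients already on the page: the bracketing $\d_P(T_t)\leq\|Q_t\|\leq 2\d_P(T_t)$ from \eqref{Qt}, the semigroup law, and Gelfand's formula, assembled by the squeeze $2^{-1/n}\|(T_1-P)^n\|^{1/n}\leq(\d_P(T_n))^{1/n}\leq\|(T_1-P)^n\|^{1/n}$ — this is almost certainly the same mechanism as in \cite{EM21}, but the reader no longer needs the external reference. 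Two small remarks. First, your algebraic step can be shortened: the identity $T_n-P=(T_1-P)^n$ follows directly from $PT_t=T_tP=P$ (Theorem \ref{Dobb}(iii)) by a one-line induction, since $(T_1^n-P)(T_1-P)=T_1^{n+1}-T_1^nP-PT_1+P=T_1^{n+1}-P$; passing through the multiplicativity $Q_{s+t}=Q_sQ_t$ of the $Q_t$-decomposition is correct but not needed (indeed the paper itself uses this identity in the theorem immediately following the lemma). Second, your caveat about $P\neq I$ is apt: under the paper's convention $\d_I\equiv 1$, the semigroup $T_t\equiv I$ would give left-hand side $1$ and right-hand side $r(0)=0$, so the lemma tacitly assumes $P\neq I$, a point the paper leaves unsaid.
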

%

\begin{theorem}
Let $P$ be a Markov projection on $X$. Assume that $\mathcal T =(T_t)$ is a uniformly $P$-ergodic $C_0$-Markov semigroup on $X$.
Then $\d_P (T_t)= e^{-\alpha t}$ for all $t > 0$ if and only if $\d_P (T_t) = r(T_t- P)$ for all $t > 0$.
 \end{theorem}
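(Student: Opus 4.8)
The plan is to reduce both implications to a single identity exhibiting $r(T_t-P)$ at once as a limit of roots of the ergodicity coefficients and as a pure exponential in $t$. First, by Theorem \ref{Dobb}(ii), since $\mathcal T$ is uniformly $P$-ergodic we may write $T_t = P + Q_t$, where $(Q_t)_{t\geq 0}$ is a strongly continuous semigroup with $Q_0 = I-P$, $PQ_t = Q_tP = 0$, and
\[
\d_P(T_t)\leq \|Q_t\|\leq 2\,\d_P(T_t),\qquad t\geq 0.
\]
In particular $T_t - P = Q_t$, so $r(T_t-P) = r(Q_t)$; moreover $\|Q_t\| = \|T_t-P\|\leq 1+\|P\|$ is uniformly bounded, and the semigroup law gives $Q_t^{\,n} = Q_{nt}$.

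The heart of the argument is the identity
\begin{equation}\label{master}
r(T_t-P)\;=\;\lim_{n\to\infty}\big(\d_P(T_{nt})\big)^{1/n}\;=\;e^{\w_0 t},\qquad t>0,
\end{equation}
where $\w_0:=\inf_{s>0}\tfrac1s\ln\|Q_s\|=\lim_{s\to\infty}\tfrac1s\ln\|Q_s\|$ is the growth bound of $(Q_t)$. For the left-hand equality I would apply Lemma \ref{LS} to the rescaled, again uniformly $P$-ergodic semigroup $S_u:=T_{tu}$ (so that $S_1=T_t$ and $\d_P(S_n)=\d_P(T_{nt})$); alternatively one argues directly from Gelfand's formula $r(Q_t)=\lim_n\|Q_{nt}\|^{1/n}$ and the above sandwich, the constant $2$ disappearing after taking $n$-th roots. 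For the right-hand equality I would use the classical relation between the spectral radius of a semigroup operator and its growth bound, $\ln r(Q_t)=t\lim_n\tfrac1{nt}\ln\|Q_{nt}\|=t\,\w_0$. Finally, uniform $P$-ergodicity yields $\|Q_{t_0}\|<1$ for some $t_0$, forcing $\w_0<0$.

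With \eqref{master} both directions follow at once. If $\d_P(T_t)=r(T_t-P)$ for every $t>0$, the right-hand equality gives $\d_P(T_t)=e^{\w_0 t}$, i.e. $\d_P(T_t)=e^{-\a t}$ with $\a:=-\w_0>0$. Conversely, if $\d_P(T_t)=e^{-\a t}$ for every $t>0$, then $\d_P(T_{nt})=e^{-\a nt}$, hence $\lim_n(\d_P(T_{nt}))^{1/n}=e^{-\a t}$, and the left-hand equality of \eqref{master} gives $r(T_t-P)=e^{-\a t}=\d_P(T_t)$.

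The step I expect to be the main obstacle is the right-hand equality in \eqref{master}: that $r(Q_t)$ is genuinely a pure exponential with one exponent $\w_0$ \emph{independent of} $t$. This is exactly what pins down the exponential shape of $\d_P(T_t)$ in the converse direction; by contrast, submultiplicativity of $\d_P$ (Theorem \ref{Dob}(v)) alone yields only the one-sided bound $\d_P(T_t)\geq e^{\w_0 t}$. The technical care lies in justifying that the \emph{continuous} growth bound $\w_0=\lim_{s\to\infty}\tfrac1s\ln\|Q_s\|$ exists and is $t$-independent, which rests on the subadditivity of $s\mapsto\ln\|Q_s\|$ together with its boundedness and lower semicontinuity coming from strong continuity of $(Q_t)$.
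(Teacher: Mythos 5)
Your proof is correct, and it takes a genuinely different route from the paper's. The paper works only at the single time $t=1$: it applies Lemma \ref{LS} to get $r(T_1-P)=e^{-\alpha}$, uses the power identity $T_n-P=(T_1-P)^n$ (valid because $T_tP=PT_t=P$, so $Q_1^n=Q_n$) to obtain $r(T_t-P)=r(T_1-P)^t$ for integer and then rational $t$, and finally invokes continuity of $t\mapsto\sigma(T_t)$, citing Rickart's theorem for commutative Banach algebras, to extend to all real $t>0$; the converse is then read off from the same identity with $\alpha=-\log r(T_1-P)$. You instead establish the two-sided identity $r(T_t-P)=\lim_{n\to\infty}\bigl(\delta_P(T_{nt})\bigr)^{1/n}=e^{\omega_0 t}$ directly: the left equality from Gelfand's formula plus the sandwich \eqref{Qt} (the factor $2$ indeed disappears in the $n$-th root), or equivalently from Lemma \ref{LS} applied to the rescaled semigroup $S_u=T_{tu}$, which is legitimately again a uniformly $P$-ergodic $C_0$-Markov semigroup; the right equality from Fekete's subadditive lemma for $s\mapsto\ln\|Q_s\|$, which is subadditive by $Q_sQ_u=Q_{s+u}$ and bounded above by $\ln 2$. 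This is the standard growth-bound identity $r(Q_t)=e^{\omega_0 t}$ from semigroup theory, and it buys you something concrete: the paper's continuity-of-spectrum step tacitly requires norm continuity of $t\mapsto T_t$ in order to compose with norm-continuity of the spectrum, and a $C_0$-semigroup is only assumed strongly continuous, whereas your argument needs nothing beyond submultiplicativity and the uniform bound $\|Q_t\|\leq 2$, so it is the more robust derivation; the paper's version is shorter given its cited references, and your version additionally identifies $\alpha=-\omega_0$ with the growth bound of $(Q_t)$, with $\omega_0\leq\frac{1}{t_0}\ln\|Q_{t_0}\|<0$ as you note. Two minor points. First, your appeal to lower semicontinuity is unnecessary: boundedness above of $\ln\|Q_s\|$ already suffices for Fekete here. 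Second, like the paper (whose $\log r(T_1-P)$ is undefined if $r(T_1-P)=0$), you implicitly assume $\omega_0>-\infty$; this is harmless but deserves a line: in the forward direction $\|Q_t\|\geq\delta_P(T_t)=e^{-\alpha t}>0$ forces $\omega_0$ finite, while in the converse direction $\omega_0=-\infty$ would give $\delta_P(T_t)=r(T_t-P)=0$, hence $T_t(I-P)=0$ and so $T_t=T_tP=P$ for all $t>0$, whence $P=I$ by strong continuity at $0$, contradicting the paper's convention $\delta_I(T_t)=1$; so the hypothesis of the converse already rules the degenerate case out.
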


 \begin{proof}
 Since $\mathcal T$ is a $C_0$-Markov semigroup which is commutative, then by \cite[Theorem 1.6.17]{Richart} , $\sigma (T_t)$ is a continuous function of $t$. Now, let us assume that $\d_P (T_t)= e^{-\alpha t}$ for all $t > 0$. Then by Lemma \ref{LS}
$$
r(T_1 - P)= \lim_{n\to\infty}\d_P (T_1^n)^{1/n} = \lim_{n\to\infty} (e^{-\alpha n})^{1/n} = e^{-\alpha}.
$$
Due to $T_n - P= T_1^n  - P = ( T_1 - P)^n$, it follows that
$r (T_n - P)= r((T_1 - P)^n)$ and
$r (T_{\frac{n}{m}} - P) = r((T_1 - P)^{\frac{n}{m}})$. The continuity of $\sigma(T_t)$ implies
$$
r (T_t - P)= r((T_1 - P)^t) = e^{-\alpha t} = \d_P (T_t), \  \  \  \forall t > 0.
$$

Now, suppose that
$\d_P (T_t)= r(T_t - P)$. Then
$$
r(T_t - P)= (r(T_1 - P))^t = e^{\log (r(T_1 - P)) t}
$$
where $\alpha = - \log (r(T_1 - P))$. This completes the proof.
 \end{proof}

Furthermore, we are going to study residuality results for Markov semigroups.

By $C ([0,\infty), L (X))$ we denote the set of all continuous maps $\mathbf{F}:[0,\infty)\to L(X)$, where $L(X)$ stands for the Banach space of all bounded linear operators on $X$. This set is equipped with the metric
$$
\rho(\mathbf{F},\mathbf{G})=\sum_{m=1}^\infty\frac{1}{2^m}\frac{\rho_m(\mathbf{F},\mathbf{G})}{1+\rho_m(\mathbf{F},\mathbf{G})},
$$
where
\begin{equation}\label{rho}
\rho_m (\mathbf{F},\mathbf{G}) = \sup_{t\in [0,m]} \|  F_t - G_t \|.
\end{equation}

It is well-known \cite{Sik} that $(C ([0,\infty), L (X)), \rho)$ is a complete metric space.


\begin{lemma}\label{X}
Let $X$ be an abstract state space and $\mathcal T, \mathcal S\in \Xi(X)$.  For each $r > 0$, we define
\begin{equation}\label{rho}
\rho_r (\mathcal T, \mathcal S) = \sup_{t\in [0,r]} \|  T_t - S_t \|.
\end{equation}
Then $\rho_r$ is a metric on $\Xi(X)$. Moreover, all $\rho_r$ and $\rho$ are equivalent on $\Xi (X)$.
\end{lemma}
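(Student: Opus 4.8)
The plan is to verify the metric axioms for each $\rho_r$ and then to reduce the equivalence statement to a single telescoping estimate powered by the semigroup law.

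First I would check that each $\rho_r$ is a metric. Finiteness, symmetry, and the triangle inequality are immediate: since $T_t,S_t$ are Markov operators we have $\|T_t\|=\|S_t\|=1$, hence $\|T_t-S_t\|\leq 2$ and the supremum defining $\rho_r$ is finite, while symmetry and the triangle inequality descend from the corresponding properties of $\|\cdot\|$ after taking the supremum over $t\in[0,r]$. The only axiom that genuinely uses the structure of $\Xi(X)$ is definiteness: if $\rho_r(\mathcal T,\mathcal S)=0$ then $T_t=S_t$ for every $t\in[0,r]$, and the semigroup identity $T_{t+s}=T_tT_s$ propagates this to all of $\br_+$, since any $t\geq 0$ can be written as a sum $t=t_1+\cdots+t_k$ with each $t_j\in[0,r]$, whence $T_t=T_{t_1}\cdots T_{t_k}=S_{t_1}\cdots S_{t_k}=S_t$, so $\mathcal T=\mathcal S$.

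The heart of the equivalence is the estimate $\rho_{nr}(\mathcal T,\mathcal S)\leq n\,\rho_r(\mathcal T,\mathcal S)$ for $n\in\bn$, $r>0$. To prove it I would fix $t\in[0,nr]$, set $k=\lceil t/r\rceil\leq n$, write $t=t_1+\cdots+t_k$ with each $t_j\in[0,r]$, and telescope
$$
T_t-S_t=\sum_{j=1}^k T_{t_1}\cdots T_{t_{j-1}}\,(T_{t_j}-S_{t_j})\,S_{t_{j+1}}\cdots S_{t_k}.
$$
Using $\|T_{t_i}\|=\|S_{t_i}\|=1$, each summand has norm at most $\|T_{t_j}-S_{t_j}\|\leq\rho_r(\mathcal T,\mathcal S)$, so $\|T_t-S_t\|\leq k\,\rho_r\leq n\,\rho_r$, and taking the supremum over $t$ gives the claim. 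Combined with the obvious monotonicity $\rho_{r_1}\leq\rho_{r_2}$ for $r_1\leq r_2$, this yields, for any $r_1,r_2$ and suitable $n$, the two-sided bound $\rho_{r_1}\leq\rho_{r_2}\leq n\,\rho_{r_1}$, so all the $\rho_r$ are Lipschitz equivalent and in particular induce the same topology.

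Finally I would match $\rho$ against $\rho_1$. Since $x\mapsto\frac{x}{1+x}$ is increasing and bounded by $1$, and $\sum 2^{-m}$ converges, $\rho\to 0$ forces each $\rho_m\to 0$, and conversely $\rho_m\to 0$ for all $m$ forces $\rho\to 0$; to upgrade this to control by $\rho_1$ alone, I invoke $\rho_m\leq m\,\rho_1$ from the telescoping estimate, split the series at a large index $M$ so that the tail is below $\varepsilon/2$, and bound the head by $M\rho_1$, which is below $\varepsilon/2$ once $\rho_1$ is small. This shows $\rho$ is equivalent to $\rho_1$, hence to every $\rho_r$. The main obstacle is precisely the estimate $\rho_{nr}\leq n\,\rho_r$: everything hinges on combining semigroup multiplicativity with the norm-one property of Markov operators, which prevents the telescoped product terms from amplifying the differences; this same input is what makes definiteness of $\rho_r$ hold on $\Xi(X)$, even though a bare supremum over a bounded interval would not separate arbitrary operator-valued maps.
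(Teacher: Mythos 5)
Your proof is correct. The paper gives no argument of its own here---it simply states that the proof is the same as in \cite{Kuna}---and your route is exactly the standard one that reference follows: the telescoping identity combined with the semigroup law and the norm-one property of Markov operators gives $\rho_{nr}(\mathcal T,\mathcal S)\leq n\,\rho_r(\mathcal T,\mathcal S)$ (which also yields definiteness of each $\rho_r$ on $\Xi(X)$ and the mutual equivalence of the $\rho_r$), and the series-splitting comparison of $\rho$ with $\rho_1$ finishes the equivalence, so you have in effect supplied the details the paper omits.
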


\begin{proof}
The proof is the same as in \cite{Kuna}.
\end{proof}

\begin{lemma}\label{X1}
$\Xi(X)$ is a $\rho_r$-closed subset of $C ([0,\infty), L (X))$.
\end{lemma}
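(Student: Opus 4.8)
The plan is to verify that $\Xi(X)$ contains the limit of any convergent sequence of its elements. By Lemma \ref{X}, on $\Xi(X)$ the metrics $\rho_r$ and $\rho$ are equivalent, so it suffices to take a sequence $\mathcal{T}^{(n)}=(T^{(n)}_t)_{t\geq 0}$ in $\Xi(X)$ together with a limit $\mathbf{F}=(F_t)_{t\geq 0}\in C([0,\infty),L(X))$ satisfying $\rho(\mathcal{T}^{(n)},\mathbf{F})\to 0$; the latter is equivalent to $T^{(n)}_t\to F_t$ in operator norm uniformly on every compact interval $[0,r]$. I would then check that $\mathbf{F}$ satisfies the four defining properties of a $C_0$-Markov semigroup.

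First I would settle the algebraic semigroup structure. The identity $F_0=I$ is immediate from $T^{(n)}_0=I$ and $\|T^{(n)}_0-F_0\|\to 0$. For the semigroup law, fix $t,s\geq 0$, pick $r>t+s$ so that norm convergence holds simultaneously at $t$, $s$ and $t+s$, and pass to the limit in $T^{(n)}_{t+s}=T^{(n)}_tT^{(n)}_s$ via the estimate
\begin{equation*}
\|T^{(n)}_tT^{(n)}_s-F_tF_s\|\leq \|T^{(n)}_t\|\,\|T^{(n)}_s-F_s\|+\|T^{(n)}_t-F_t\|\,\|F_s\|.
\end{equation*}
The crucial observation is that each $T^{(n)}_t$ is Markov, hence $\|T^{(n)}_t\|=1$, so the right-hand side is uniformly controlled and tends to $0$; combined with $T^{(n)}_{t+s}\to F_{t+s}$ this gives $F_{t+s}=F_tF_s$. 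Strong continuity at $t\downarrow 0$ is automatic, since $\mathbf{F}\in C([0,\infty),L(X))$ already means $t\mapsto F_t$ is norm-continuous, whence $\|F_t-I\|\to 0$.

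It remains to show each $F_t$ is a Markov operator, and here the abstract state space axioms enter. For positivity, take $x\in X_+$; then $T^{(n)}_tx\in X_+$ for all $n$, and since $T^{(n)}_tx\to F_tx$ with $X_+$ closed, the limit $F_tx$ lies in $X_+$, so $F_t$ is positive. For $F_t(\mathcal{K})\subset\mathcal{K}$, take $x\in\mathcal{K}$; then $T^{(n)}_tx\in\mathcal{K}$, and closedness of $\mathcal{K}$ (it is a closed face of the unit ball) yields $F_tx\in\mathcal{K}$. Equivalently, one may combine positivity with continuity of the functional $f\in X^*$: passing to the limit in $f(T^{(n)}_tx)=1$ gives $f(F_tx)=1$, and $F_tx\in X_+$ forces $F_tx\in\mathcal{K}$. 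Thus $F_t\in\Sigma(X)$ for every $t$, and $\mathbf{F}\in\Xi(X)$.

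I expect the product-limit computation to present no real difficulty; the only step requiring genuine care is the preservation of the Markov property under passage to the limit, which relies on the topological closedness of the cone $X_+$ and of the base $\mathcal{K}$ built into the definition of an abstract state space, rather than on any metric estimate. The uniform-on-compacta nature of $\rho$ is exactly what makes the pointwise convergences $T^{(n)}_t\to F_t$ available simultaneously at the finitely many parameter values needed in each verification.
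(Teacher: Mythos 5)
Your proof is correct and follows essentially the same route as the paper's: pass to the norm limit uniformly on compact intervals, verify $F_0=I$ and the semigroup law by the same triangle-inequality estimate using $\|T^{(n)}_t\|=1$, and deduce that each $F_t$ is Markov from closedness of $X_+$ and continuity of $f$ (the paper checks $f(H_tx)=1$ in exactly this way). If anything, your handling of strong continuity at $0$ --- reading it off from membership in $C([0,\infty),L(X))$ rather than, as in step (iii) of the paper's proof, invoking that the limit is already a $C_0$-semigroup --- is the cleaner of the two.
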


\begin{proof}
Let ${\mathcal T}_n \xrightarrow{\rho_r} \mathcal H $, when ${\mathcal T}_n \in \Xi(X)$. From \eqref{rho} one gets
$$
\| {T}_{n,t}- H_t\| \leq \rho_r (\mathcal T, \mathcal P),  \   \  \forall t \in [0,r].
$$
Hence, by Lemma \ref{X} we infer that  $T_{n,t}- H_t\to 0 $ as $n\to\infty$, for all $t\geq 0$.

Let us establish that for each $t\geq 0$ the operator $P_t$ is Markov. Indeed, let $x\in \ck$, then
\begin{eqnarray*}
|1 - f( H_tx)| &=& | f( T_{n,t} x ) - f(H_tx) | \\
&\leq& \| (T_{n,t} - H_t) x\|\\
& \leq& \| T_{n,t}- H_t\| \| x\| \to 0.
\end{eqnarray*}
It implies that $f(H_tx) = 1$. The positivity of $T_{n,t}$ implies $H_t$ is also positive.

Now, we show that $H_t$ is a $C_0$-semigroup. Indeed, we have
\begin{itemize}
\item[(i)] $\| H_0 - I \| = \| H_0- T_0+ T_0- I \| = \| H_0- T_0 \| \to 0 \  \  as \  n\to \infty$
which implies $H_0=I$

\item[(ii)]
\begin{eqnarray*}
\| H_{t + s} - H_tH_s \|&\leq& \|H_{t + s} - T_{t + s}\|+\|T_{t + s}-T_tT_s\|+
\| T_tT_s- H_tH_s\|  \\
&\leq& \|H_{t + s} - T_{t + s}\|+\| T_t\|\|T_s- H_s\|+\|H_s\|\|T_t-H_t\|\to 0
\end{eqnarray*}
Hence $H_{t + s}=H_tH_s$.

\item[(iii)] $
\displaystyle
\lim_{t\to 0} \|H_t x -x \| \leq \lim_{t\to 0} (\|H_tx - T_{n,t} x \| + \| T_{n,t}x - x \|) \to 0, $  because ${\mathcal T}_n \xrightarrow{\rho_r} \mathcal H $, and $\mathcal H$ is a $C_0$-semigroup.
\end{itemize}
This completes the proof.
\end{proof}

%

\begin{lemma}
 Let $\mathcal T \in \Xi_P(X)$. Then the following conditions are equivalent:
 \begin{itemize}
 \item[(i)] The semigroup $\mathcal T$ is uniformly $P$-ergodic;

 \item[(ii)] There exists a Markov projection $P$ with $T_tP=P$ such that for some (all) $t_0 > 0$ we have $\lim\limits_{n\to \infty} \|T_{nt_0} - P\| =0$.

 \item[(iii)] There exists a Markov projection $P$ with $T_tP=P$ and $t_n\to\infty$ such that $\lim\limits_{t_n\to \infty} \|T_{t_n} - P\| =0$.
 \end{itemize}
\end{lemma}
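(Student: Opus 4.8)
The plan is to prove the cyclic chain of implications (i) $\Rightarrow$ (ii) $\Rightarrow$ (iii) $\Rightarrow$ (i). The first two are immediate. If $\mathcal T$ is uniformly $P$-ergodic, then $\|T_t-P\|\to 0$ as $t\to\infty$, and since $nt_0\to\infty$ for every fixed $t_0>0$ we obtain (ii) for all $t_0$ at once; here the identity $T_tP=P$ is recovered by letting $s\to\infty$ in $T_tT_s=T_{t+s}$ and using $\|T_s-P\|\to 0$. For (ii) $\Rightarrow$ (iii) one simply takes the sequence $t_n=nt_0$.

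The substance of the lemma lies in (iii) $\Rightarrow$ (i). First I would introduce the operator $Q_t:=T_t-P$. Using $P^2=P$ together with the two identities $T_tP=P$ (from the hypothesis) and $PT_t=T_tP=P$ (the latter combining $\mathcal T\in\Xi_P(X)$ with $T_tP=P$), a short computation gives
$$
Q_tQ_s=(T_t-P)(T_s-P)=T_tT_s-T_tP-PT_s+P^2=T_{t+s}-P=Q_{t+s},
$$
as well as $PQ_t=Q_tP=0$, so that $(Q_t)_{t\geq 0}$ is itself a semigroup with $Q_0=I-P$; its strong continuity is inherited from that of $(T_t)$. From the hypothesis $\|T_{t_n}-P\|=\|Q_{t_n}\|\to 0$ I would then select an index $N$ so that $s_0:=t_N$ satisfies $\|Q_{s_0}\|<1$.

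At this point the cleanest route is to invoke Theorem \ref{Dobb}: the decomposition $T_t=P+Q_t$ with $PQ_t=Q_tP=0$, $Q_0=I-P$, and the existence of $s_0$ with $\|Q_{s_0}\|<1$ is exactly condition (ii) of that theorem, which is equivalent to uniform $P$-ergodicity. Alternatively, one may argue directly: writing $t=ns_0+r$ with $0\leq r<s_0$, the semigroup relation yields $Q_t=Q_{s_0}^{\,n}Q_r$, whence
$$
\|Q_t\|\leq \|Q_{s_0}\|^{\,n}\,\sup_{0\leq r\leq s_0}\|Q_r\|,
$$
and since $\sup_{0\leq r\leq s_0}\|Q_r\|\leq 1+\|P\|<\infty$ while $\|Q_{s_0}\|<1$, the right-hand side tends to $0$ as $t\to\infty$, i.e. $\|T_t-P\|\to 0$.

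The main obstacle is the (iii) $\Rightarrow$ (i) step, and within it the only genuinely delicate point is verifying that $(Q_t)$ is multiplicative; once $Q_{t+s}=Q_tQ_s$ is in hand, the passage from convergence along the sparse sequence $(t_n)$ to convergence along all of $\br_+$ is forced by submultiplicativity. One should also be careful that the bound $\sup_{0\leq r\leq s_0}\|Q_r\|<\infty$ is legitimate --- it follows from $\|T_r\|=1$ for Markov operators and $\|P\|=1$ --- since a $C_0$-semigroup need not be norm-continuous and one cannot simply appeal to continuity of $r\mapsto\|Q_r\|$.
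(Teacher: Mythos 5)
Your proof is correct, but on the one substantive implication, (iii) $\Rightarrow$ (i), you take a genuinely different route from the paper. You build the complementary semigroup $Q_t=T_t-P$ (the multiplicativity check $Q_tQ_s=Q_{t+s}$ using $PT_t=T_tP=P$ is exactly right, and needs both $\mathcal T\in\Xi_P(X)$ and $T_tP=P$), extract one time $s_0=t_N$ with $\|Q_{s_0}\|<1$, and conclude by submultiplicativity $\|Q_{ns_0+r}\|\leq\|Q_{s_0}\|^n\|Q_r\|\leq 2\|Q_{s_0}\|^n$ --- or equivalently by invoking condition (ii) of Theorem \ref{Dobb}, whose hypotheses (Markov projection $P$, $Q_0=I-P$, $PQ_t=Q_tP=0$, $\|Q_{s_0}\|<1$) you have verified. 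The paper instead argues by contradiction using only Markovianity: if $\sup_{x\in\mathcal{K}}\|T_{s_j}x-Px\|\geq\epsilon$ along some $s_j\to\infty$, write $s_j=t_{n_j}+r_j$ with $r_j\geq 0$ and observe that $T_{s_j}x-Px=(T_{t_{n_j}}-P)T_{r_j}x$ (using $PT_{r_j}=P$) with $T_{r_j}(\mathcal{K})\subset\mathcal{K}$, so the quantity $\sup_{x\in\mathcal{K}}\|T_tx-Px\|$ is effectively non-increasing in $t$ and its convergence to $0$ along the sparse sequence $(t_n)$ forces convergence along all of $\mathbb{R}_+$ --- no norm strictly below $1$ is ever needed. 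The two arguments buy different things: the paper's monotonicity argument is softer and shorter, while yours is quantitative, yielding as a byproduct geometric decay $\|T_t-P\|\leq 2\|Q_{s_0}\|^{\lfloor t/s_0\rfloor}$, i.e.\ the exponential estimate of Theorem \ref{Dobb}(iii), rather than bare convergence. Your side remarks are also sound: recovering $T_tP=P$ in (i) $\Rightarrow$ (ii) by letting $s\to\infty$ in $T_tT_s=T_{t+s}$ is legitimate, and you are right that one must bound $\sup_{0\leq r\leq s_0}\|Q_r\|$ by $\|T_r\|+\|P\|=2$ rather than appeal to norm continuity, which a $C_0$-semigroup need not have.
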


\begin{proof} The implications  $(i) \Rightarrow (ii) \Rightarrow (iii)$ are obvious. Therefore, it is enough to establish $(iii) \Rightarrow (i)$.

Assume that (i) fails, then for some $\epsilon >0$ and $s_j \to \infty$, we have
$$
\sup_{x\in \ck} \|T_{s_j} x - Px \| \geq \epsilon.
$$

Let us choose $n_j \to \infty$ such that $s_j > t_{n_j}$ and denote $r_j = s_j - t_{n_j} \geq 0$.
\begin{eqnarray*}
\displaystyle
\sup_{x\in \ck} \|T_{s_j} x - Px \| &=& \sup_{x\in \ck} \| T_{t_{n_j}}T_{r_j}  x - PT_{r_j}x \|  \\
&=& \sup_{y\in T_{r_j} (\ck)} \|T_{t_{n_j}} y - Py \| \\
&\leq& \sup_{x\in \ck} \|T_{t_{n_j}} x - Px \|  \to 0
\end{eqnarray*}
which is a contradiction.
\end{proof}

Denote
\begin{eqnarray*}
& & \Xi_P^{u} (X) := \{\mathcal T \in \Xi_P (X): \mathcal T \  \text{is uniformly $P$-ergodic}\}, \\
& & \Xi_P^{inv} (X) := \{\mathcal T \in \Xi_P  (X): \ T_tP =P, \forall t\geq 0\}.
\end{eqnarray*}

It is obvious that $ \Xi_P^{u} (X)\subset \Xi_P^{inv} (X)$.

\begin{lemma}\label{TPO}
Let $X$ be an abstract state space and $P$ be a Markov projection on $X$. Then the set $\Xi_P^{u} (X)$ is $\rho_1$ open in $\Xi_P^{inv} (X)$.
\end{lemma}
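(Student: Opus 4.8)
The plan is to show that $\Xi_P^u(X)$ is $\rho_1$-open in $\Xi_P^{inv}(X)$ by producing, for each $\mathcal T\in\Xi_P^u(X)$, an explicit radius $\varepsilon>0$ so that every $\mathcal S\in\Xi_P^{inv}(X)$ with $\rho_1(\mathcal T,\mathcal S)<\varepsilon$ is again uniformly $P$-ergodic. The natural criterion to verify for $\mathcal S$ is the one from Theorem~\ref{Dobb}, namely that there exists some time $t_0>0$ with $\d_P(S_{t_0})<1$, together with the invariance $PS_t=S_tP=P$ which is already guaranteed by $\mathcal S\in\Xi_P^{inv}(X)$. So the whole problem reduces to transporting a strict contraction estimate for the Dobrushin coefficient from $\mathcal T$ to nearby $\mathcal S$.

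First I would use that $\mathcal T\in\Xi_P^u(X)$ to fix a time $t_0\in[0,1]$ at which $\d_P(T_{t_0})<1$; by Theorem~\ref{Dobb}(iii) such a $t_0$ exists, and after rescaling (or simply passing to a small enough $t_0$, since $\d_P(T_t)\to 0$) I may assume $t_0\le 1$, which is what lets me work within the $\rho_1$-metric. Set $\kappa:=\d_P(T_{t_0})<1$. The key tool is Theorem~\ref{Dob}(ii), the Lipschitz-type bound
\begin{equation*}
|\d_P(T_{t_0})-\d_P(S_{t_0})|\le \d_P(T_{t_0}-S_{t_0})\le \|T_{t_0}-S_{t_0}\|.
\end{equation*}
Since $t_0\le 1$, the right-hand side is dominated by $\rho_1(\mathcal T,\mathcal S)$. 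Hence, choosing $\varepsilon:=\tfrac{1-\kappa}{2}>0$, any $\mathcal S$ in the $\rho_1$-ball of radius $\varepsilon$ around $\mathcal T$ satisfies
\begin{equation*}
\d_P(S_{t_0})\le \d_P(T_{t_0})+\|T_{t_0}-S_{t_0}\|<\kappa+\varepsilon=\tfrac{1+\kappa}{2}<1.
\end{equation*}

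Finally I would invoke Theorem~\ref{Dobb} in the other direction: for $\mathcal S\in\Xi_P^{inv}(X)$ we already have $PS_t=S_tP=P$ for all $t\ge 0$, and we have just produced a $t_0>0$ with $\d_P(S_{t_0})<1$, so condition (iii) of Theorem~\ref{Dobb} holds for $\mathcal S$ and therefore $\mathcal S$ is uniformly $P$-ergodic, i.e. $\mathcal S\in\Xi_P^u(X)$. This exhibits the $\rho_1$-ball of radius $\varepsilon$ around $\mathcal T$ as contained in $\Xi_P^u(X)$, proving openness. The main point requiring care — and the only genuine obstacle — is the interplay between the metric and the contraction time: one must make sure the contracting time $t_0$ can be chosen in $[0,1]$ so that $\rho_1$ actually controls $\|T_{t_0}-S_{t_0}\|$; this is exactly where I rely on $\d_P(T_t)\to0$ (equivalently $\|Q_t\|\to0$ from Theorem~\ref{Dobb}(ii)) to relocate the strict-contraction time into the unit interval. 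Everything else is a direct application of the estimates in Theorem~\ref{Dob}(ii) and the equivalence in Theorem~\ref{Dobb}.
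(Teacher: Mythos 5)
Your overall strategy---transfer $\d_P(T_{t_0})<1$ from $\mathcal T$ to nearby $\mathcal S$ via the Lipschitz bound of Theorem \ref{Dob}(ii) and conclude with Theorem \ref{Dobb}---is exactly the paper's, but there is a genuine gap at the step you yourself identify as the crux: the claim that the contraction time $t_0$ may be assumed to lie in $[0,1]$. Your justification, $\d_P(T_t)\to 0$, is a statement about $t\to\infty$ (it follows from $\|T_t-P\|\to 0$ together with \eqref{Qt}); it pushes the strict-contraction time toward \emph{large} $t$, not into the unit interval. For small $t$ nothing forces $\d_P(T_t)<1$: submultiplicativity (Theorem \ref{Dob}(v)) gives $\d_P(T_{t_0})\le \d_P(T_{t_0/N})^{N}$, which goes the wrong way, and it is perfectly possible that $\d_P(T_s)=1$ for all $s\le 1$ while $\d_P(T_{t_0})<1$ only for $t_0>1$ (think of a transport-type semigroup in which two states at full distance are carried without contraction for a unit of time before any mixing occurs). ``Rescaling'' is not available either: replacing $T_t$ by $T_{ct}$ changes the semigroup, while the set $\Xi_P^{u}(X)$ and the metric $\rho_1$ are fixed, so openness for a rescaled family says nothing about the original one.

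The gap is repairable, and the paper's proof shows how: keep the possibly large $t_0$ with $\d_P(T_{t_0})=1-\varepsilon$, pick $N\in\mathbb N$ with $t_0/N<1$, and use that Markov operators are contractions to telescope,
\[
\|S_{t_0}-T_{t_0}\|=\bigl\|(S_{t_0/N})^{N}-(T_{t_0/N})^{N}\bigr\|\le N\,\|S_{t_0/N}-T_{t_0/N}\|\le N\,\rho_1(\mathcal T,\mathcal S),
\]
so that the $\rho_1$-ball of radius $\varepsilon/(2N)$ around $\mathcal T$ lands inside $\Xi_P^{u}(X)$; your estimate then goes through with $1-\varepsilon+N\cdot\varepsilon/(2N)=1-\varepsilon/2$ in place of $\kappa+\varepsilon$. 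Alternatively, you could invoke Lemma \ref{X}, which asserts that all the metrics $\rho_r$ (and $\rho$) are equivalent on $\Xi(X)$: it then suffices to prove openness in $\rho_r$ for some $r\ge t_0$, where your argument is correct exactly as written. The remainder of your proof (the choice of threshold, the use of Theorem \ref{Dob}(ii), and the verification that $\mathcal S\in\Xi_P^{inv}(X)$ supplies $PS_t=S_tP=P$ so that Theorem \ref{Dobb}(iii) applies) is sound and matches the paper.
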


\begin{proof} Take any $\mathcal{T}\in \Xi_P^{u} (X)$. Then, by Theorem \ref{Dobb} there exists $t_0$ such that $\d_P (T_{t_0})=1-\varepsilon$, for some $0<\varepsilon<1$.  Assume that $N\in\mathbb{N}$ such that $\frac{t_0}{N}<1$. Denote
$$
U(\mathcal{T},\varepsilon)=\bigg\{\mathcal{R}\in \Xi_P^{inv} (X): \ \rho_1(\mathcal{R},\mathcal{T})<\frac{\varepsilon}{2N}\bigg\}.
$$

Now, let us establish that $U(\mathcal{T},\varepsilon)\subset \Xi_P^{u} (X)$. Indeed, take any $\mathcal{R}\in U(\mathcal{T},\varepsilon)$, then
\begin{eqnarray*}
\d_P (R_{t_0}) &\leq& \d_P (T_{t_0})+\d_P (R_{t_0}-T_{t_0})\\[2mm]
&\leq & 1-\varepsilon+ \|R_{t_0}-T_{t_0}\|\\[2mm]
&=&1-\varepsilon+ \|(R_{t_0/N})^N-(T_{t_0/N})^N\|\\[2mm]
&\leq &1-\varepsilon+ N\|R_{t_0/N}-T_{t_0/N}\|\\[2mm]
&< &1-\varepsilon+ N\frac{\varepsilon}{2N}\\[2mm]
&=&1-\frac{\varepsilon}{2}
\end{eqnarray*}
which, again by Theorem \ref{Dobb} yields that $\mathcal{R}\in \Xi_P^{u} (X)$.
This completes the proof.
\end{proof}

Let $A$ ba the generator of $T_t$. Then for $Q \in L(X)$, then the operator $C := A + Q$
with $D(C) := D(A)$ generates a $C_0$-semigroup $
(T^Q_t)_{t\geq 0}$
 satisfying
$$
\left\| T^Q_t\right\| \leq  M e^{(w+M \left\|Q\right\|)t} \ \ \forall
t \geq 0.
$$
and
$$
\displaystyle
 T^Q_t = T_t + \int_0^t T_{t - s}QT^Q_sds
$$
holds for every $t \geq 0 $ and $x \in X$.

Moreover, the semigroup $(T^Q_t)_{t\geq 0}$ could be written as follows
$$\displaystyle T^Q_t = T^Q_{0,t} + \sum_{k=1}^{\infty} T^Q_{k,t},
$$
where $T^Q_{0,t}= T_t$ and inductively
\begin{equation}\label{TQ1}
T^Q_{k+1,t} = \int_0^t T_{t-s}QT^Q_{k,s} ds.
\end{equation}
The series is convergent in the uniform operator norm and the convergence is uniform on bounded intervals.

Now, following the Phillips perturbation theorem \cite{DS}, the generator $A + \lambda (Q-I)$ defines a semigroup  $\mathcal{T}_{\lambda,Q}=(T^{\lambda, Q-I}_t)$ by
\begin{equation}\label{TQ2}
T^{\lambda, Q-I}_t = e^{-\lambda t}\bigg(T_t+ \sum_{k=1}^{\infty} \lambda^k T^Q_{k,t}\bigg).
\end{equation}

Now, we are going to establish that if $\mathcal T$ is Markov semigroup and $Q$ is a Markov operator, then the defined semigroup is Markov as well. This type of result appears in diverse areas as applied  sciences \cite{MD,RPT02}.

\begin{lemma}\label{TQM}
Let $X$ be an abstract state space and $Q$ be a Markov operator on $X$. If $\mathcal T \in \Xi(X)$ then $\mathcal{T}_{\lambda,Q}\in \Xi(X)$. Moreover, if $\mathcal T \in \Xi_P(X)$, then $\mathcal{T}_{\lambda,P}\in \Xi_P(X)$, for a Markov projection $P$.
\end{lemma}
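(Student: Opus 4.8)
The plan is to observe that, by the Phillips perturbation theorem already invoked above, $\mathcal{T}_{\lambda,Q}=(T^{\lambda,Q-I}_t)$ is automatically a $C_0$-semigroup; hence the only thing left to verify for $\mathcal{T}_{\lambda,Q}\in\Xi(X)$ is that each $T^{\lambda,Q-I}_t$ is a Markov operator. Since $f(x)=\|x\|$ for $x\geq 0$, $T_t^*f=f$, and $Q^*f=f$, a positive operator $S$ is Markov precisely when $f(Sx)=f(x)$ for all $x\geq 0$ (equivalently $S(\ck)\subset\ck$). I would therefore split the verification into (a) positivity of $T^{\lambda,Q-I}_t$ and (b) the identity $f(T^{\lambda,Q-I}_t x)=f(x)$, working throughout with the Dyson--Phillips expansion \eqref{TQ2} whose terms are given by \eqref{TQ1} and whose series converges in operator norm, uniformly on bounded time intervals.

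For positivity I would argue by induction on $k$ that every $T^Q_{k,t}$ is positive: $T^Q_{0,t}=T_t$ is positive because $\mathcal T$ is Markov, and $T^Q_{k+1,t}=\int_0^t T_{t-s}QT^Q_{k,s}\,ds$ is an operator-valued integral of compositions of positive operators, hence positive since $X_+$ is closed and the integral is a norm-limit of Riemann sums of positive operators. Assuming $\lambda\geq 0$, the leading term $T_t$ and the series $\sum_{k\geq 1}\lambda^k T^Q_{k,t}$ are then positive, and multiplication by $e^{-\lambda t}>0$ preserves positivity, so $T^{\lambda,Q-I}_t\geq 0$.

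For the functional identity I would compute $f(T^Q_{k,t}x)$ by induction, using $T_t^*f=f$, $Q^*f=f$, and the fact that the bounded functional $f$ commutes with the integral. The recursion \eqref{TQ1} collapses to $f(T^Q_{k+1,t}x)=\int_0^t f(T^Q_{k,s}x)\,ds$, which together with the base case $f(T_tx)=f(x)$ yields $f(T^Q_{k,t}x)=\frac{t^k}{k!}f(x)$. Summing the series in \eqref{TQ2} then gives $f(T^{\lambda,Q-I}_t x)=e^{-\lambda t}f(x)\sum_{k\geq 0}\frac{(\lambda t)^k}{k!}=f(x)$; combined with positivity this shows $T^{\lambda,Q-I}_t(\ck)\subset\ck$, i.e. each $T^{\lambda,Q-I}_t$ is Markov, proving $\mathcal{T}_{\lambda,Q}\in\Xi(X)$.

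For the second statement (with $Q=P$ a Markov projection and $\mathcal T\in\Xi_P(X)$, so $T_tP=PT_t$), I would again proceed termwise, proving by induction that $T^P_{k,t}P=PT^P_{k,t}$. The base case is the hypothesis $T_tP=PT_t$; for the inductive step, inside $T^P_{k+1,t}=\int_0^t T_{t-s}PT^P_{k,s}\,ds$ I would push $P$ across $T_{t-s}$ and use $P^2=P$ together with the inductive hypothesis to identify both $T^P_{k+1,t}P$ and $PT^P_{k+1,t}$ with $\int_0^t T_{t-s}PT^P_{k,s}\,ds$. Passing to the norm-convergent series \eqref{TQ2}, where norm limits preserve the commutation relation, then gives $T^{\lambda,P-I}_tP=PT^{\lambda,P-I}_t$, so $\mathcal{T}_{\lambda,P}\in\Xi_P(X)$. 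The main technical points to handle with care are the interchange of $f$ and of $P$ with the operator-valued integrals and the legitimacy of the termwise manipulations, both of which are underwritten by the uniform-on-bounded-intervals operator-norm convergence of the Dyson--Phillips series; the remaining arguments are routine inductions.
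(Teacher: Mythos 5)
Your proposal is correct and follows essentially the same route as the paper: positivity of each Dyson--Phillips term $T^Q_{k,t}$ by induction via Riemann-sum approximation and closedness of $X_+$, the inductive identity $f(T^Q_{k,t}x)=\frac{t^k}{k!}f(x)$ obtained from $T_t^*f=f$ and $Q^*f=f$, and summation of the series \eqref{TQ2} to conclude $f(T^{\lambda,Q-I}_tx)=f(x)$. The only divergence is that you spell out the termwise commutation induction $T^P_{k,t}P=PT^P_{k,t}$ for the second claim, which the paper merely asserts from \eqref{TQ1} and \eqref{TQ2} --- a filling-in of detail rather than a different argument.
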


\begin{proof}
Since $Q$ and $\mathcal T$ are positive then for each $k \geq 1$,  the operator $T^Q_{k,t}$ is positive because of the approximating Riemann integral by the partial sums. Therefore, clearly for each $t\geq 0$, the operator $\mathcal{T}_{\lambda,Q}$ is positive.
Now, it is enough to prove that $\mathcal{T}_{\lambda,Q} $ is Markov for each $t\geq 0$. Take any element $x\in\ck$. We use the induction for each $k \geq 1$.  Since for every $t\geq 0$, the operator $T_t$ and $Q$ are Markov, then $f(T_{t-s} Q T_tx)=1$, ($t-s\geq 0$). Therefore,
\begin{eqnarray*}
\displaystyle
f(T^Q_1 (t) x) &=& f \bigg(\int_0^t T_{t-s}Q T_s x ds\bigg)\\
&=&  f \bigg(\lim_{n\to\infty} \sum_{k=0}^n T_{t - s_j} Q T_{s_j} x \Delta s\bigg) \\
&=&  \lim_{n\to\infty} \sum_{k=0}^n f(T_{t - s_j} Q T_{s_j} x) \Delta s \\
&=&  \lim_{n\to\infty} \sum_{k=0}^n  \Delta s \\
&=& \int^t_0ds=t \\
\end{eqnarray*}

Moreover, let us assume that by induction for each $t\geq 0$ and each $x\in\ck$, we have $\displaystyle f(T^Q_{k,t}x) = \frac{t^k}{k!}$.

Then, for $k+1$, one gets
\begin{eqnarray*}
\displaystyle
f(T^Q_{k+1,t} x) &=& f \bigg(\int_0^t T_{t-s} QT_s x ds\bigg)\\
&=&  f \bigg(\lim_{n\to\infty} \sum_{k=0}^n T_{t - s_j}Q T^Q_{k,s_j}x \Delta s\bigg) \\
&=&  \lim_{n\to\infty} \sum_{k=0}^n f(T_{t - s_j}Q T^Q_{k,s_j}x) \Delta s \\
&=&  \lim_{n\to\infty} \sum_{k=0}^n \frac{s^k_{j}}{k!}  \Delta s \\
&=&\int^t_0\frac{s^k}{k!}ds\\
&=& \frac{ t^{k+1}}{(k+1)!}\\
\end{eqnarray*}

Finally for $\mathcal{T}_{\lambda,Q}$, we find
\begin{eqnarray*}
\displaystyle
f(T^{\lambda, Q-I}_t x) &=& f (e^{-\lambda t} [T_t + \sum_{k=1}^{\infty} \lambda^k T^Q_{k,t} ] x)\\
&=&  e^{-\lambda t} f([T_t + \sum_{k=1}^{\infty} \lambda^k T^Q_{k,t}]x) \\
&=&  e^{-\lambda t} [f(T_tx)  + \sum_{k=1}^{\infty} \lambda^k f(T^Q_{k,t}x) ]\\
&=& e^{-\lambda t} \bigg(1 + \sum_{k=1}^{\infty} \frac{(\lambda t)^k }{k!}\bigg) = 1.
\end{eqnarray*}

If $\mathcal T \in \Xi_P(X)$, then from \eqref{TQ1} and \eqref{TQ2}, we infer that $\mathcal{T}_{\lambda,P}\in \Xi_P(X)$. This completes the proof.
\end{proof}

\begin{theorem}
Let $X$ be an abstract state space and $P$ be a Markov projection on $X$. Then the set $\Xi_P^{u} (X)$ is $\rho_1$ dense and open in $\Xi_P^{inv} (X)$.
\end{theorem}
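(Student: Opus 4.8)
The openness half of the statement is exactly Lemma \ref{TPO}, so the plan is to concentrate on $\rho_1$-denseness: given $\mathcal T=(T_t)\in\Xi_P^{inv}(X)$ and $\varepsilon>0$, I would produce a uniformly $P$-ergodic semigroup lying $\rho_1$-close to $\mathcal T$. The natural device is the Phillips-type perturbation of Lemma \ref{TQM} specialized to $Q=P$, namely $\mathcal T_{\lambda,P}=(T^{\lambda,P-I}_t)$, and the strategy is to let the perturbation parameter $\lambda\downarrow 0$.

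The step I expect to be decisive is the explicit evaluation of the perturbation series \eqref{TQ2}. Since $\mathcal T\in\Xi_P^{inv}(X)$ gives $T_tP=PT_t=P$ for all $t\ge 0$, and $P^2=P$, every kernel in the Dyson expansion should collapse according to $T_{t-s}\,P\,T_s=P$. Feeding this identity into the recursion \eqref{TQ1} and arguing by induction on $k$ is expected to yield $T^P_{k,t}=\frac{t^k}{k!}\,P$ for every $k\ge 1$, and then substituting into \eqref{TQ2} and summing the exponential series would produce the closed form
$$
T^{\lambda,P-I}_t=e^{-\lambda t}T_t+\bigl(1-e^{-\lambda t}\bigr)P,\qquad t\ge 0 .
$$
I regard this collapse of the series into a single convex combination of $T_t$ and $P$ as the heart of the argument; once it is in hand the rest is elementary.

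From the closed form I would read off $T^{\lambda,P-I}_t-P=e^{-\lambda t}(T_t-P)$, so that $\|T^{\lambda,P-I}_t-P\|=e^{-\lambda t}\|T_t-P\|\le 2e^{-\lambda t}\to 0$ as $t\to\infty$; hence $\mathcal T_{\lambda,P}$ is uniformly $P$-ergodic, and together with $\mathcal T_{\lambda,P}\in\Xi_P(X)$ from Lemma \ref{TQM} this places it in $\Xi_P^{u}(X)$. For the distance I would use $T_t-T^{\lambda,P-I}_t=(1-e^{-\lambda t})(T_t-P)$, which for $t\in[0,1]$ gives $\|T_t-T^{\lambda,P-I}_t\|\le 2(1-e^{-\lambda})$, whence $\rho_1(\mathcal T,\mathcal T_{\lambda,P})\le 2(1-e^{-\lambda})$; choosing $\lambda>0$ with $2(1-e^{-\lambda})<\varepsilon$ then settles denseness, and combining with Lemma \ref{TPO} completes the proof. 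The one technical point requiring care is justifying that the pointwise identity $T_{t-s}PT_s=P$ passes through the Riemann-sum approximation of the integrals in \eqref{TQ1} so that the induction is legitimate, but this is routine given the positivity and continuity already exploited in the proof of Lemma \ref{TQM}.
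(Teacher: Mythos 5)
Your proposal is correct, and it shares the paper's overall skeleton exactly: openness via Lemma \ref{TPO}, and density by perturbing a given $\mathcal T\in\Xi_P^{inv}(X)$ to $\mathcal S=\mathcal T_{\lambda,P}$ with $\lambda$ chosen so that $2(1-e^{-\lambda})<\varepsilon$. Where you genuinely depart from the paper is in how the two required properties of $\mathcal S$ are verified. The paper never writes a closed form: for the distance it estimates the perturbation series term by term, using only the scalar identity $f(T^P_{k,t}x)=\frac{t^k}{k!}$ on $\ck$ from Lemma \ref{TQM} to get $\|T_tx-T^{\lambda,P-I}_tx\|\le 2(1-e^{-\lambda t})$; for ergodicity it shows $T^P_{k,t_0}x=0$ for $x\in N_P$, deduces $\delta_P(T^{\lambda,P-I}_{t_0})\le e^{-\lambda t_0}<1$, and invokes the criterion of Theorem \ref{Dobb}. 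You instead exploit the full strength of $\Xi_P^{inv}$ inside the kernels, namely $T_{t-s}PT_s=T_{t-s}T_sP=T_tP=P$, so induction in \eqref{TQ1} yields the operator identity $T^P_{k,t}=\frac{t^k}{k!}P$ (stronger than the paper's scalar version, and legitimate precisely because $T_tP=PT_t=P$ here, a hypothesis not available in the general setting of Lemma \ref{TQM}), and \eqref{TQ2} collapses to $T^{\lambda,P-I}_t=e^{-\lambda t}T_t+(1-e^{-\lambda t})P$. This buys you three things: the same bound $\rho_1(\mathcal T,\mathcal S)\le 2(1-e^{-\lambda})$; direct exponential convergence $\|T^{\lambda,P-I}_t-P\|\le 2e^{-\lambda t}\to 0$, so uniform $P$-ergodicity holds by definition with no appeal to Theorem \ref{Dobb}; and the invariance $T^{\lambda,P-I}_tP=PT^{\lambda,P-I}_t=P$ immediately --- a point the paper asserts by citing Lemma \ref{TQM}, which strictly speaking only yields $\mathcal T_{\lambda,P}\in\Xi_P(X)$, so your closed form in fact repairs a small slippage in the paper's wording. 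The paper's key estimates reappear as one-line corollaries of your identity (for $x\in N_P$ one gets $T^{\lambda,P-I}_tx=e^{-\lambda t}T_tx$, hence $\delta_P(T^{\lambda,P-I}_t)\le e^{-\lambda t}$), and your Riemann-sum caveat is disposed of exactly as in the paper's proof of Lemma \ref{TQM}.
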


\begin{proof} The openness follows from Lemma \ref{TPO}. Therefore, let us establish that $\Xi_P^{u} (X)$ is dense. Namely, given a semigroup $\mathcal T \in \Xi_P^{inv} (X)$ and $\varepsilon> 0$, we show that there exists $\mathcal S \in \Xi_P^{u} (X)$ such that $\rho_1 (\mathcal T, \mathcal S) < \varepsilon$.

Let $\lambda>0$ such that $2(1-e^{-\lambda})<\varepsilon$, and assume that $\mathcal S ={\mathcal T}^{\lambda, P-I}$ (see \eqref{TQ2}). By Lemma \ref{TQM}, we infer that $\mathcal S\in \Xi_P^{inv} (X)$.
Moreover, for every $x\in \ck$ and $k \in\mathbb N$, we obtain
\begin{eqnarray*}
\displaystyle
\| T_tx - T^{\lambda, P-I}_t x\| &=& \bigg\| T_tx - e^{-\lambda t} \big[T _t+ \sum_{k=1}^{\infty}\lambda^k T^P_{k,t} \big] x \bigg\| \\
&\leq& (1- e^{-\lambda t}) \| T_tx \| + e^{-\lambda t} \sum_{k=1}^{\infty} \lambda^k \|T^P_{k,t}x \|  \\
&=& 1- e^{-\lambda t}  + e^{-\lambda t} \sum_{k=1}^{\infty} \lambda^k  \frac{t^k}{k!} \\
&=& 2 (1- e^{-\lambda t}).
 \end{eqnarray*}

Hence
\begin{eqnarray*}
\rho_1 (\mathcal T, \mathcal S) &=&\sup_{t\in[0,1], x\in\ck} \|T_t x -  T^{\lambda, P-I}_t x \|\\[2mm]
&\leq& \sup_{t\in[0,1]} 2(1-e^{-\lambda t})\\
&=& 2(1-e^{-\lambda})<\varepsilon.
\end{eqnarray*}

Now, one needs to demonstrate that $\mathcal S$ is uniformly $P$-ergodic. For all $x\in N_P$ and arbitrary $t_0 > 0$, we have
$$
\displaystyle
T^P_{1,t_0} x = \int_0^{t_0} T_{t_0 - s} P T_s x ds = \int_0^{t_0} T_{t_0 - s}T_sP x ds = 0,
$$
and hence for all $k \geq 2$
$$
\displaystyle
T^P_{k,t_0} x = \int_0^{t_0} T_{t_0 - s}P T_{k,s}^P x ds = 0.
$$
Therefore
\begin{eqnarray*}
\| T^{\lambda, P-I}_{t_0} x \| &=&\| e^{-\lambda t_0} [T_{t_0} + \sum_{k=1}^{\infty} \lambda^k T^P_{k,t_0} ]x \| \\
&\leq & e^{-\lambda t_0}  \|T_{t_0}\|\|x\|\\
&  \leq&  e^{-\lambda t_0}\|x\|.
\end{eqnarray*}
Hence, $\d_P (T^{\lambda, P-I}_{t_0})\leq e^{-\lambda t_0} <1$, which by Theorem \ref{Dobb} implies $\mathcal S \in \Xi_P^{u}$.
This completes the proof.
\end{proof}

We denote
\begin{eqnarray*}
\Xi^{u}(X) := \{\mathcal T \in \Xi(X): \mathcal T \  \text{is uniformly ergodic}\}.
\end{eqnarray*}
then using the same argument we can prove the following result.
\begin{theorem}
Let $X$ be an abstract state space. Then the set $\Xi^{u} (X)$ is $\rho_1$ dense and open in $\Xi(X)$.
\end{theorem}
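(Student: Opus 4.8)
The plan is to reduce the whole statement to a single scalar criterion that no longer mentions a fixed projection. The starting point is the Remark following the definition of $\d_P$: for a one-dimensional projection $Px=f(x)y_0$ one has $N_P=\{x\in X:\ f(x)=0\}$, and this subspace is \emph{the same} for every such $P$. Consequently $\d_P(T)$ is independent of the particular $y_0\in\ck$, and I may write $\d(T):=\d_P(T)$ for any rank-one Markov projection $P$. I would first establish the characterization
\[
\mathcal T\in\Xi^{u}(X)\iff \exists\, t_0>0:\ \d(T_{t_0})<1 .
\]
The forward implication is immediate from Theorem \ref{Dobb} together with $\d_P=\d$. For the converse, if $\d(T_{t_0})<1$ then, since $u-v\in N_P$ whenever $u,v\in\ck$, the Markov operator $T_{t_0}$ contracts the complete metric space $(\ck,\|\cdot\|)$ (a closed subset of $X$) with factor $\d(T_{t_0})<1$; Banach's fixed point theorem yields a unique $y_0\in\ck$ with $T_{t_0}y_0=y_0$, and applying $T_s$ and invoking uniqueness gives $T_sy_0=y_0$ for all $s$. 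Then $P=f(\cdot)y_0$ is a Markov projection with $T_tP=P$ (as $y_0$ is stationary), $PT_t=P$ (automatic, since $f\circ T_t=f$), and $\d_P(T_{t_0})=\d(T_{t_0})<1$, so Theorem \ref{Dobb} gives $\mathcal T\in\Xi^{u}(X)$.

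For openness I would take $\mathcal T\in\Xi^{u}(X)$ and fix $t_0$ with $\d(T_{t_0})=1-\varepsilon$, $0<\varepsilon<1$. Choosing $N$ with $t_0/N<1$ and working in the $\rho_1$-ball $\{\mathcal R\in\Xi(X):\rho_1(\mathcal R,\mathcal T)<\varepsilon/(2N)\}$, I would combine Theorem \ref{Dob}(ii) with the telescoping bound $\|R_{t_0}-T_{t_0}\|=\|(R_{t_0/N})^N-(T_{t_0/N})^N\|\le N\|R_{t_0/N}-T_{t_0/N}\|$ (exactly as in Lemma \ref{TPO}) to obtain $\d(R_{t_0})\le\d(T_{t_0})+\|R_{t_0}-T_{t_0}\|<1-\varepsilon+\varepsilon/2<1$, whence $\mathcal R\in\Xi^{u}(X)$ by the characterization. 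The limiting one-dimensional projection of $\mathcal R$ is produced anew by the fixed point argument and is in general different from that of $\mathcal T$.

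For density, fix any $y_0\in\ck$, set $P=f(\cdot)y_0$ (a rank-one Markov operator), choose $\lambda>0$ with $2(1-e^{-\lambda})<\varepsilon$, and put $\mathcal S=\mathcal T^{\lambda,P-I}$. By Lemma \ref{TQM}, $\mathcal S\in\Xi(X)$, and the estimate of the preceding theorem gives $\rho_1(\mathcal T,\mathcal S)\le 2(1-e^{-\lambda})<\varepsilon$. To verify $\mathcal S\in\Xi^{u}(X)$ I would extend the induction of Lemma \ref{TQM} by linearity to $f(T^P_{k,s}x)=\tfrac{s^k}{k!}f(x)$; hence for $x\in N_P$ one has $PT^P_{k,s}x=f(T^P_{k,s}x)y_0=0$, and an induction on $k$ via \eqref{TQ1} shows $T^P_{k,t_0}x=0$ for all $k\ge1$. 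Therefore on $N_P$ the perturbation collapses to $S_{t_0}x=e^{-\lambda t_0}T_{t_0}x$, giving $\d(S_{t_0})\le e^{-\lambda t_0}<1$, so $\mathcal S\in\Xi^{u}(X)$ by the characterization.

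The main obstacle—and the only genuine departure from the preceding theorem—is that the ambient space is all of $\Xi(X)$ rather than the fixed-projection space $\Xi_P^{inv}(X)$, so neither a common stationary projection nor the commutation $PT_t=T_tP$ is available a priori; in particular the density computation must avoid the commutation step used before, which is why I rely instead on $f(T^P_{k,s}x)=\tfrac{s^k}{k!}f(x)$ vanishing on $N_P$. Both difficulties dissolve once the two structural facts are in place: the independence of $\d_P$ of the rank-one projection (reducing everything to the scalar $\d(T_t)$), and the automatic manufacture, through Banach's fixed point theorem on $\ck$, of each semigroup's own one-dimensional limit projection from the bound $\d(T_{t_0})<1$. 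With these, the openness and density estimates are verbatim those of Lemma \ref{TPO} and the preceding theorem.
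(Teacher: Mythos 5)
Your proof is correct, and it is more than the paper provides: the paper's ``proof'' of this theorem is literally the phrase ``using the same argument,'' whereas a verbatim transfer of the $\Xi_P^{inv}(X)$ argument actually breaks in two places, both of which you correctly identify and repair. First, openness: the Lemma~\ref{TPO} computation bounds $\d_P(R_{t_0})$, but in $\Xi(X)$ a nearby semigroup $\mathcal R$ need not commute with, or fix, the projection of $\mathcal T$, so $\d_P(R_{t_0})<1$ alone does not certify membership in $\Xi^u(X)$ via Theorem~\ref{Dobb}(iii). Your observation that $N_P=\ker f$ is the \emph{same} subspace for every rank-one Markov projection (so $\d_P(T)=\d(T)$ is projection-free), combined with the Banach fixed-point construction of a stationary $y_0\in\ck$ from the strict contraction $\|T_{t_0}u-T_{t_0}v\|\le\d(T_{t_0})\|u-v\|$ on the closed set $\ck$, manufactures each semigroup's own limit projection $P=f(\cdot)y_0$ satisfying $T_tP=P$ and $PT_t=P$ (the latter from $T_t^*f=f$), which is exactly the missing bridge. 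Second, density: the paper's vanishing $T^P_{1,t_0}x=\int_0^{t_0}T_{t_0-s}PT_sx\,ds=\int_0^{t_0}T_{t_0-s}T_sPx\,ds=0$ uses the commutation $PT_s=T_sP$, unavailable in $\Xi(X)$; your replacement $f(T^P_{k,s}x)=\frac{s^k}{k!}f(x)$ (extended from $\ck$ by $f(x)=\|x\|$ on $X_+$ and $X=X_+-X_+$), giving $PT^P_{k,s}x=f(T^P_{k,s}x)y_0=0$ on $N_P$, does the job --- though note that for the rank-one $P=f(\cdot)y_0$ the single identity $PT_sx=f(x)y_0=0$ on $N_P$ already kills $T^P_{1,t}$ and hence, inductively, all $T^P_{k,t}$, slightly shortening your step. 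One small point worth making explicit in your forward implication: the one-dimensional projection $P$ in the definition of uniform ergodicity is automatically Markov (as a norm limit of Markov operators, using closedness of $X_+$ and $f(T_tx)=1$), and hence necessarily of the form $f(\cdot)y_0$ with $y_0\in\ck$, which is what licenses both the identification $\d_P=\d$ and the application of Theorem~\ref{Dobb}; this is implicit in the paper's Remark but should be stated.
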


\begin{remark} The last theorem extends the results of the papers \cite{BK1,Kuna,LM92,R97} to more general spaces.
\end{remark}

\section{Uniform mean $P$-ergodicity of $C_0$-Markov semigroups}

Let $\mathcal T =(T_t)$ be a $C_0$-semigroup. Then the Cesaro averages is
$$
\displaystyle
{\mathcal A}_t (\mathcal T) = \frac{1}{t} \int_0^t T_s ds
$$
where the integral above is taken with respect to the strong operator topology.  The Cesaro averages is one of the well-known example for LR-nets. Uniform ergodicity of LR-nets of Markov operators on abstract state spaces in in terms of the Dobrushin ergodicity coefficient is studied in \cite{EM2018}. This section is devoted to study uniform mean ergodicity of $C_0$-semigroups by means of a generalized Dobrushin ergodicity coefficient.

\begin{definition} Let $P$ be a projection. A $C_0$-Markov semigroup $\mathcal T=(T_t)_{t\geq 0}$ is defined on $X$  is called
\begin{enumerate}
\item[(i)] {\it mean-$P$-ergodic} if
 for every $x\in X$
$$
\lim_{t\to\infty}\| \A x - Px \|=0;
$$
\item[(ii)] {\it uniform mean-$P$-ergodic} if
$$
\lim_{t\to\infty}\| \A  - P \|=0;
$$
\item[(iii)] \textit{ weakly-$P$-mean ergodic} if
$$
\lim_{t\to\infty} \d_P (\A) =0.
$$
\end{enumerate}
\end{definition}

%
%
%
%

 \begin{theorem}\label{UME}
Let $\mathcal T \in \Xi(X)$ and $\mathcal T$ be mean $Q$-ergodic. Then the following statements are equivalent:
\begin{itemize}
\item[(i)] $\mathcal T$ is uniformly $Q$-mean ergodic;

\item[(ii)] There exists  $t_0 \in\br_+$ such
that $\d_Q ({\mathcal A}_{t_0}(T) )<1$.
\end{itemize}
Moreover
\begin{equation}\label{ume}
\displaystyle
\|\A - Q\| \leq \frac{2 t_0}{1 - \d_Q  ({\mathcal A}_{t_0}(T) )} \frac{1}{t}.
\end{equation}
\end{theorem}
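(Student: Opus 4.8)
The plan is to treat the two implications separately and then extract the rate. For $(i)\Rightarrow(ii)$: assuming $\|\mathcal{A}_t(\mathcal{T})-Q\|\to 0$, note that for $x\in N_Q$ one has $Qx=0$, so $\|\mathcal{A}_t(\mathcal{T})x\|=\|(\mathcal{A}_t(\mathcal{T})-Q)x\|\le\|\mathcal{A}_t(\mathcal{T})-Q\|\,\|x\|$; taking the supremum over $N_Q$ gives $\delta_Q(\mathcal{A}_t(\mathcal{T}))\le\|\mathcal{A}_t(\mathcal{T})-Q\|\to 0$, hence $\delta_Q(\mathcal{A}_{t_0}(\mathcal{T}))<1$ for all large $t_0$. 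This direction is immediate.

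For the converse and the rate, I would first record the structural facts coming from mean $Q$-ergodicity: $Q$ is a Markov projection onto the fixed space, $T_sQ=QT_s=Q$ for all $s$, whence $\mathcal{A}_s(\mathcal{T})Q=Q\mathcal{A}_s(\mathcal{T})=Q$ and $\mathcal{A}_s(\mathcal{T})\in\Sigma_Q(X)$. Writing $B:=\mathcal{A}_{t_0}(\mathcal{T})$ and $\beta:=\delta_Q(B)<1$, the decomposition $x=Qx+(I-Q)x$ together with $\mathcal{A}_t(\mathcal{T})Q=Q$ gives $\mathcal{A}_t(\mathcal{T})x-Qx=\mathcal{A}_t(\mathcal{T})(I-Q)x$ with $(I-Q)x\in N_Q$, so (using $\|I-Q\|\le 2$)
\[
\|\mathcal{A}_t(\mathcal{T})-Q\|\le 2\,\delta_Q(\mathcal{A}_t(\mathcal{T})),
\]
and everything reduces to estimating $\delta_Q(\mathcal{A}_t(\mathcal{T}))$. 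The engine is Theorem \ref{Dob}(v): since $B\in\Sigma_Q(X)$ we get $\delta_Q(B^k)\le\beta^k$, and because $B^kQ=Q$ the same decomposition yields $\|B^k-Q\|\le 2\beta^k$. Averaging the powers, $M_n:=\frac1n\sum_{k=0}^{n-1}B^k$ satisfies $M_n-Q=\frac1n\sum_{k=0}^{n-1}(B^k-Q)$, so by the geometric series
\[
\|M_n-Q\|\le\frac1n\sum_{k=0}^{n-1}2\beta^k\le\frac{2}{n(1-\beta)};
\]
evaluated at $t=nt_0$ this is exactly $\frac{2t_0}{(1-\beta)t}$, i.e. the bound \eqref{ume}.

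The genuine obstacle is that the continuous Cesàro average $\mathcal{A}_{nt_0}(\mathcal{T})$ is \emph{not} the power-average $M_n$: the semigroup identity only gives $\mathcal{A}_{nt_0}(\mathcal{T})=B\,C_n$ with $C_n=\frac1n\sum_{k=0}^{n-1}T_{kt_0}$, and the naive estimate $\delta_Q(BC_n)\le\delta_Q(B)\|C_n\|=\beta$ from Theorem \ref{Dob}(iii) saturates at $\beta$ and sees no decay. Recovering the $1/n$ rate therefore requires exploiting the cancellation in $\int_0^{nt_0}T_sx\,ds$ rather than the triangle inequality; concretely I would prove the uniform bound $\|\int_0^{nt_0}T_sx\,ds\|\le\frac{t_0}{1-\beta}\|x\|$ for $x\in N_Q$, which is equivalent to the claim since $t\,\delta_Q(\mathcal{A}_t(\mathcal{T}))=\delta_Q(\int_0^t T_s\,ds)$ by homogeneity of $\delta_Q$. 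The clean way to pass from the powers $B^k$ (where submultiplicativity applies) to this integral is to compare $\mathcal{A}_{nt_0}(\mathcal{T})$ with $M_n$ — both converge strongly to $Q$ by mean ergodicity — and to make the comparison quantitative through the resolvent $R(\lambda)=\int_0^\infty e^{-\lambda s}T_s\,ds$ of the generator restricted to $N_Q$, whose boundedness as $\lambda\downarrow 0$ is forced by $\beta<1$; I expect this transfer to be the delicate point. Finally, for arbitrary $t$ I would write $t=nt_0+r$ with $0\le r<t_0$ and split $\mathcal{A}_t(\mathcal{T})-Q=\frac{nt_0}{t}(\mathcal{A}_{nt_0}(\mathcal{T})-Q)+\frac1t\int_{nt_0}^t(T_s-Q)\,ds$; since $\delta_Q(T_s-Q)=\delta_Q(T_s)\le 1$, the remainder contributes at most $t_0/t$, of the same order, and assembling the two pieces gives norm convergence of $\mathcal{A}_t(\mathcal{T})$ to $Q$ together with \eqref{ume}.
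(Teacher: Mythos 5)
There is a genuine gap: the quantitative heart of the theorem --- the bound $\delta_Q(\mathcal{A}_t(\mathcal{T}))\le \frac{t_0}{t(1-\beta)}$ --- is never actually proved in your proposal. You correctly dispose of (i)$\Rightarrow$(ii), correctly reduce \eqref{ume} to such a $\delta_Q$-bound via $\|\mathcal{A}_t(\mathcal{T})-Q\|\le 2\,\delta_Q(\mathcal{A}_t(\mathcal{T}))$, and correctly diagnose that the naive estimate $\delta_Q(\mathcal{A}_{nt_0}(\mathcal{T}))\le\beta$ saturates; but the step you then propose --- comparing $\mathcal{A}_{nt_0}(\mathcal{T})$ with the power average $M_n=\frac1n\sum_{k=0}^{n-1}B^k$ ``quantitatively through the resolvent'' --- is left entirely unexecuted (``I expect this transfer to be the delicate point''), and it is moreover at risk of circularity: the uniform boundedness of $R(\lambda)$ on $N_Q$ as $\lambda\downarrow 0$ is, via $R(\lambda)x=\lambda\int_0^\infty e^{-\lambda t}\bigl(\int_0^t T_s x\,ds\bigr)dt$, essentially equivalent to the uniform bound on $\int_0^t T_s x\,ds$ for $x\in N_Q$ that you are trying to establish. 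The detour through $\delta_Q(B^k)\le\beta^k$ is a dead end, since the powers $B^k$ never occur inside $\mathcal{A}_t(\mathcal{T})$.

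The missing idea is elementary and bypasses powers, $M_n$, and the resolvent entirely. First, the Ces\`aro cancellation estimate: a change of variables $u\mapsto u+s$ shows that only boundary strips of total length $2s$ survive, so $\|\mathcal{A}_t(\mathcal{T})(I-T_s)\|\le \frac{2s}{t}$; averaging this over $s\in[0,t_0]$ gives $\|\mathcal{A}_t(\mathcal{T})(I-\mathcal{A}_{t_0}(\mathcal{T}))\|\le \frac{t_0}{t}$. Then, since $\mathcal{A}_{t_0}(\mathcal{T})\in\Sigma_Q(X)$ (which you already verified from mean $Q$-ergodicity), parts (ii) and (v) of Theorem \ref{Dob} yield
\begin{equation*}
\delta_Q(\mathcal{A}_t(\mathcal{T}))\;\le\;\delta_Q\bigl(\mathcal{A}_t(\mathcal{T})(I-\mathcal{A}_{t_0}(\mathcal{T}))\bigr)+\delta_Q\bigl(\mathcal{A}_t(\mathcal{T})\mathcal{A}_{t_0}(\mathcal{T})\bigr)\;\le\;\frac{t_0}{t}+\rho\,\delta_Q(\mathcal{A}_t(\mathcal{T})),
\end{equation*}
whence $\delta_Q(\mathcal{A}_t(\mathcal{T}))\le \frac{t_0}{t(1-\rho)}$, and your reduction finishes the proof; this is exactly the paper's route, and it needs no splitting of $t$ as $nt_0+r$. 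Equivalently, in your own language: $N_Q$ is invariant under $B=\mathcal{A}_{t_0}(\mathcal{T})$ and $\|Bx\|\le\beta\|x\|$ there, so $I-B$ is invertible on $N_Q$ with $\|(I-B)^{-1}\|\le(1-\beta)^{-1}$, and for $x\in N_Q$ one gets $\|\mathcal{A}_t(\mathcal{T})x\|=\|\mathcal{A}_t(\mathcal{T})(I-B)(I-B)^{-1}x\|\le\frac{t_0}{t(1-\beta)}\|x\|$ directly from the cancellation estimate --- the one inequality your proposal never supplies.
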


\begin{proof}
 (i) $\Rightarrow $(ii). Since $\mathcal T$ is uniformly $Q$-mean ergodic, then $\A$ is uniformly $P$-ergodic.  Then, using the
  argument of Theorem \ref{Dobb} there exists $t_0>0$ such that $ \d_Q ({\mathcal A}_{t_0}(T) ) < 1$.

 Let us prove (ii)$\Rightarrow$ (i).  Assume that there exists $t_0\in\br_+$ such that $\rho = \d_Q ({\mathcal A}_{t_0}(T))<1$. Then for an arbitrary $s > 0$,
\begin{eqnarray*}
\displaystyle
\left\| \A (I - T_s) \right\| &=& \left\|\frac{1}{t} \int_0^t T_u du - \frac{1}{t} \int_0^t T_u T_s du \right\| \\
&=& \left\|  \frac{1}{t} \int_0^t T_u du - \frac{1}{t} \int_0^{t} T_{u+s}  du \right\|   \ \ \ \ \text{use change of variable as } \ u+s = z \\
\\
&=& \left\|  \frac{1}{t} \int_0^t T_u du - \frac{1}{t} \int_s^{t+s} T_z  dz \right\| \\
&\leq&  \frac{1}{t} \int_0^s  \left\|T_u \right\| du + \frac{1}{t}
\int_t^{t+s} \left\| T_u \right\| du  \leq \frac{2s}{t}.
\end{eqnarray*}
Hence, for every $s\in\br_+$ one gets
\begin{eqnarray*}
\left\| \A (I- {\mathcal  A}_{s} (\mathcal T)) \right\|& =& \left\|
A_t(\mathcal T) \bigg(\frac{1}{s} \int_{0}^s
(I-T_u) du\bigg) \right\| \\[2mm]
&=& \left\| \frac{1}{s} \int_{0}^{s} A_t(\mathcal T) (I-T_u) du
\right\| \\[2mm]
&\leq & \frac{1}{s} \int_{0}^{s}\| A_t(\mathcal T) (I-T_u)\| du\\[2mm]
&\leq & \frac{1}{s} \int_{0}^{s}\frac{2u}{t}du\\[2mm]
&=&\frac{s}{t}
\end{eqnarray*}

By Theorem \ref{Dob}, we have
\begin{equation}\label{dd1}
\d_Q (\A (I- {\mathcal A}_{t_0} ) (\mathcal T))\leq \frac{t_0}{t}
\end{equation}

Due to the mean $Q$-ergodicity of $\ct$, we have $\ca_t Q=Q\ca_t$, hence again using Theorem \ref{Dob}(vi) one finds
\begin{eqnarray}\label{dd2}
\d_Q ({\mathcal A}_t(\mathcal T)(I- {\mathcal A}_{t_0}(\mathcal T))&\geq&  \d_Q ({\mathcal A}_t(\mathcal T))-\d_Q ({\mathcal A}_t(\mathcal T)
{\mathcal A}_{t_0}(\mathcal T))\nonumber\\[2mm]
&\geq&  \d_Q ({\mathcal A}_t(\mathcal T))-\d_Q ({\mathcal A}_t(\mathcal T))\d_Q( {\mathcal A}_{t_0}(\mathcal T))\nonumber\\[2mm]
&\geq&  (1-\rho)\d_Q \A,
\end{eqnarray}
which implies $\d_Q (\A ) \leq \frac{t_0}{t ( 1 - \rho)}$.
The weakly $Q$-mean ergodicity of  $\mathcal T$ implies its uniform $Q$-mean ergodicity.

Now we need to show \eqref{ume}. Noting that
$$
\displaystyle
\d_Q \A = \sup_{x\in N_Q} \frac{\|\A x\|}{\|x\|}.
$$
and $x\in N_Q$, there exists $y\in X$ such that $x = y -Qy$. Hence
$$
\d_Q \A \geq \sup_{y \in X} \frac{\|\A y - \A Q y \|}{\|y - Qy\|} = \sup_{y \in X} \frac{\|\A y -  Q y \|}{\|y - Qy\|}  \geq \frac{1}{2} \|\A - Q\|.
$$
This implies $\|\A - Q\| \leq 2 \d_Q \A \leq \frac{2 t_0}{t ( 1- \rho)}$.
\end{proof}

\begin{remark} We point out that in \cite{L} the uniform mean $P$-ergodicity and its relations with the generator of $(T_t)$ and its resolvent were investigated. Recently, in \cite{TKB} the uniform mean $P$-ergodicity and its equivalence to uniform Abel ergodicity have been studied. However, our result is essential in terms of the generalized Dobrushin ergodicity coefficient, and it gives a rate of
convergence of the Cesaro averages.
\end{remark}

%

\begin{proposition}
Let $\mathcal T \in \Xi_P (X)$ and let $P$ be a Markov projection.
 Then the followings are equivalent:
\begin{itemize}
\item[(i)] There exists $t_0$, and there exists $n_0$ such that $\delta_P ({\mathcal A}_{t_0}^{n_0} (\mathcal T) ) < 1$.
\item[(ii)] $\mathcal T$ is weakly-$P$-mean ergodic.
\end{itemize}
\end{proposition}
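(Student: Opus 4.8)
The plan is to prove the two implications separately, with essentially all the content residing in (i) $\Rightarrow$ (ii). The implication (ii) $\Rightarrow$ (i) is immediate: if $\mathcal{T}$ is weakly-$P$-mean ergodic then $\delta_P({\mathcal A}_t(\mathcal T)) \to 0$ as $t\to\infty$, so there is some $t_0$ with $\delta_P({\mathcal A}_{t_0}(\mathcal T)) < 1$, and taking $n_0 = 1$ gives (i) at once.

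For (i) $\Rightarrow$ (ii) I would set $\rho := \delta_P({\mathcal A}_{t_0}^{n_0}(\mathcal T)) < 1$ and first record two structural facts. Since $\mathcal T \in \Xi_P(X)$, each $T_s$ commutes with $P$, hence so does every Ces\`aro average ${\mathcal A}_t(\mathcal T)$; moreover each average is a Markov operator, so every power ${\mathcal A}_{t_0}^k(\mathcal T)$ is Markov and ${\mathcal A}_{t_0}^{n_0}(\mathcal T) \in \Sigma_P(X)$. I would then reuse the telescoping estimate already established inside the proof of Theorem \ref{UME}, namely $\|{\mathcal A}_t(\mathcal T)(I - {\mathcal A}_{s}(\mathcal T))\| \leq s/t$, valid for every $s>0$.

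The crucial new step is the algebraic factorization
\[
I - {\mathcal A}_{t_0}^{n_0}(\mathcal T) = \left( I - {\mathcal A}_{t_0}(\mathcal T)\right) \sum_{k=0}^{n_0-1}{\mathcal A}_{t_0}^{k}(\mathcal T).
\]
Because each summand is Markov, the final factor has norm at most $n_0$, so combining it with the telescoping bound yields
\[
\delta_P\bigl({\mathcal A}_t(\mathcal T)(I - {\mathcal A}_{t_0}^{n_0}(\mathcal T))\bigr) \leq \bigl\|{\mathcal A}_t(\mathcal T)(I - {\mathcal A}_{t_0}^{n_0}(\mathcal T))\bigr\| \leq \frac{n_0 t_0}{t}.
\]
For the matching lower bound I would invoke Theorem \ref{Dob}(ii) together with Theorem \ref{Dob}(v), the latter applied with $S = {\mathcal A}_{t_0}^{n_0}(\mathcal T) \in \Sigma_P(X)$, to get
\[
\delta_P\bigl({\mathcal A}_t(\mathcal T)(I - {\mathcal A}_{t_0}^{n_0}(\mathcal T))\bigr) \geq \delta_P({\mathcal A}_t(\mathcal T)) - \delta_P({\mathcal A}_t(\mathcal T))\,\delta_P({\mathcal A}_{t_0}^{n_0}(\mathcal T)) = (1-\rho)\,\delta_P({\mathcal A}_t(\mathcal T)).
\]
Chaining the two inequalities gives $\delta_P({\mathcal A}_t(\mathcal T)) \leq n_0 t_0 / \bigl(t(1-\rho)\bigr)$, which tends to $0$ as $t\to\infty$; this is precisely weak-$P$-mean ergodicity.

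I expect no serious obstacle, since the argument closely mirrors the proof of Theorem \ref{UME} (the case $n_0=1$). The one genuinely new ingredient is handling the $n_0$-th power: one must pass from $I - {\mathcal A}_{t_0}^{n_0}(\mathcal T)$ to $I - {\mathcal A}_{t_0}(\mathcal T)$ through the telescoping factorization and exploit the uniform norm bound $\|{\mathcal A}_{t_0}^{k}(\mathcal T)\| \leq 1$ on Markov powers. The only care needed is bookkeeping: checking that all operators entering Theorem \ref{Dob}(ii),(v) indeed lie in $\Sigma_P(X)$ so those properties apply, and that the extra factor $n_0$ from the telescoping sum, being a fixed constant, does not spoil the decay in $t$.
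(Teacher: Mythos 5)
Your proof is correct and takes essentially the same route as the paper's: the identical telescoping factorization $I - {\mathcal A}_{t_0}^{n_0}(\mathcal T) = (I - {\mathcal A}_{t_0}(\mathcal T))\sum_{k=0}^{n_0-1}{\mathcal A}_{t_0}^{k}(\mathcal T)$, the norm bound on the Markov powers, and the same lower bound $(1-\rho)\,\delta_P({\mathcal A}_t(\mathcal T)) \leq \delta_P\bigl({\mathcal A}_t(\mathcal T)(I - {\mathcal A}_{t_0}^{n_0}(\mathcal T))\bigr)$ via Theorem \ref{Dob}(ii),(v). The only (harmless) difference is that you carry the explicit rate $\delta_P({\mathcal A}_t(\mathcal T)) \leq n_0 t_0/\bigl(t(1-\rho)\bigr)$ from the estimate $\|{\mathcal A}_t(\mathcal T)(I-{\mathcal A}_s(\mathcal T))\| \leq s/t$, whereas the paper records only the qualitative convergence to zero.
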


\begin{proof}
(ii) $\Rightarrow $(i) Since $\mathcal T$ is weakly-$P$-mean ergodic,
$\displaystyle \lim_{t\to\infty} \delta_P ({\mathcal A} _t(\mathcal T)) =0$. Then (i) is a direct conclusion.

(i) $\Rightarrow $(ii)  Assume that there exist $t_0 \in\mathbb R_+$ and $n_0 \in \mathbb N$ such that
$\rho := \delta_P ({\mathcal A}_{t_0}^{n_0} (\mathcal T) ) < 1$. Due to $T_t P = P T_t$, ${\mathcal A}_t (\mathcal T) P = P {\mathcal A}_t (\mathcal T) $, one finds
\begin{eqnarray*}
\left\| {\mathcal A}_t (\mathcal T)  ( I - {\mathcal A}_{t_0}^{n_0} (\mathcal T) ) \right\| &\leq &
\left\| {\mathcal A}_t (\mathcal T)  ( I - {\mathcal A}_{t_0} (\mathcal T))  (I + {\mathcal A}_{t_0} (\mathcal T) + {\mathcal A}_{t_0}^2 (\mathcal T) + \cdots +{\mathcal A}_{t_0}^{n_0 - 1} (\mathcal T) \right\| ) \\
&\leq& \left\| {\mathcal A}_t (\mathcal T)  ( I - {\mathcal A}_{t_0} (\mathcal T)) \right\| ( \left\| I \right\| +  \left\| {\mathcal A}_{t_0} (\mathcal T) \right\|  + \left\| {\mathcal A}_{t_0}^2 (\mathcal T) \right\|  \\
& &  + \cdots + \left\| {\mathcal A}_{t_0}^{n_0 - 1} (\mathcal T) \right\| )  \to 0.
\end{eqnarray*}
Therefore $\delta_P ( {\mathcal A}_t (\mathcal T)  ( I - {\mathcal A}_{t_0}^{n_0} (\mathcal T) ) $ converges to zero as
$t \to \infty$ and also $$| \delta_P( {\mathcal A}_t (\mathcal T) )  - \delta_P( {\mathcal A}_t (\mathcal T) {\mathcal A}_{t_0}^{n_0} (\mathcal T) )| \to 0  \  \  \text{as} \  \  t\to\infty$$.
By
\begin{eqnarray*}
\delta_P ({\mathcal A}_t (\mathcal T)  ( I - {\mathcal A}_{t_0}^{n_0} (\mathcal T))  &\geq&\delta_P( {\mathcal A}_t (\mathcal T) )  - \delta_P( {\mathcal A}_t (\mathcal T) {\mathcal A}_{t_0}^{n_0} (\mathcal T) ) \\
&\geq& \delta_P({\mathcal A}_t (\mathcal T))   (1 - \rho),
\end{eqnarray*}
it follows that $\displaystyle  \delta_P ({\mathcal A}_t (\mathcal T)  ) $ converges to zero.
\end{proof}

Historically, one of the most significant conditions for ergodicity is Doeblin's
Condition \cite{Lz,Num}: Let $(E,\cf)$ be a measurable space and $P(x,\cdot)$ be a Markov chain on $E$. There exist a probability measure $\mu$, an integer $m_0\geq 1$ and constants
$\epsilon< 1$ and $\kappa > 0$ such that for every $A\in \cf$ if $\mu(A)\geq\epsilon$ then
$$
\inf_{x\in E} P^{m_0}(x,A)\geq \kappa.
$$

Now, we are going to introduce a mean analogue of Doeblin's Condition.

Recall that for a Markov projection  $Q: X\to X$  we write $Q\leq P$ if $Q=QP=PQ$.

Let $(X, X_+, \mathcal K, f)$ be an abstract state space. Assume that $P$ is a Markov projection on $X$ and let $\mathcal T \in \Xi_P (X)$.
We say that $\ct$ satisfies \textit{condition $\frak{D}_P^{(m)}$}: if there exist constants
$\t\in(0,1]$,  $t_0>0$ and a Markov projection $Q$ with $Q\leq P$ such that for every $x\in \ck$  one finds $\f_{x}\in X_{+}$ with
$$
\sup\limits_{x}\|\f_{x}\|\leq \frac{\t}{4}
$$
such that
\begin{equation}\label{Dm}
\ca_{t_0}(\ct)x+\f_{x}\geq\t Qx.
\end{equation}

\begin{theorem}
Assume that $(X, X_+, \mathcal K, f)$ be a strong abstract state space. Let $P$ be a Markov projection on $X$ and let $\mathcal T \in \Xi_P (X)$. If $\mathcal T$ satisfies the condition $\mathcal D_P^{(m)}$, then
$\mathcal T$ is weakly $P$-mean ergodic.
\end{theorem}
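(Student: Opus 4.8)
The plan is to reduce the whole statement to a single uniform estimate on the Dobrushin coefficient of one Cesàro average, and then to invoke the Proposition immediately preceding this theorem. Concretely, I will show that the condition $\mathfrak{D}_P^{(m)}$ forces $\delta_P(\mathcal A_{t_0}(\mathcal T))\le 1-\tau/2<1$; since $\mathcal T\in\Xi_P(X)$ and $P$ is a Markov projection, that Proposition (taken with $n_0=1$) then yields at once $\delta_P(\mathcal A_t(\mathcal T))\to 0$, i.e. that $\mathcal T$ is weakly $P$-mean ergodic. I first record that $\mathcal A_{t_0}(\mathcal T)=\frac1{t_0}\int_0^{t_0}T_s\,ds$ is itself Markov: it is positive, being a norm limit of convex combinations of the positive operators $T_s$, and for $x\in\ck$ one has $f(\mathcal A_{t_0}(\mathcal T)x)=\frac1{t_0}\int_0^{t_0}f(T_sx)\,ds=1$, so $\mathcal A_{t_0}(\mathcal T)\ck\subset\ck$.

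Since $X$ is a \emph{strong} abstract state space, Remark \ref{SS1} allows me to compute
\[
\delta_P(\mathcal A_{t_0}(\mathcal T))=\tfrac12\sup\bigl\{\|\mathcal A_{t_0}(\mathcal T)u-\mathcal A_{t_0}(\mathcal T)v\|:\ u,v\in\ck,\ u-v\in N_P\bigr\}.
\]
I fix such a pair $u,v\in\ck$ with $P(u-v)=0$ and write $A:=\mathcal A_{t_0}(\mathcal T)$. The decisive observation is that $Q\le P$, i.e. $Q=QP$, together with $Pu=Pv$ gives $Qu=QPu=QPv=Qv$; hence the element $w:=\tau Qu=\tau Qv\in X_+$ is \emph{common} to both, and since $Qu\in\ck$ one has $f(w)=\tau$. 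Applying the defining inequality of $\mathfrak{D}_P^{(m)}$ to $u$ and to $v$ produces $Au+\varphi_u\ge w$ and $Av+\varphi_v\ge w$ with $\varphi_u,\varphi_v\in X_+$ and $\|\varphi_u\|,\|\varphi_v\|\le\tau/4$.

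Next I estimate $\|Au-Av\|$ through a common-minorant decomposition. Setting $g_1:=Au-(w-\varphi_u)\ge0$ and $h_1:=Av-(w-\varphi_v)\ge0$, I get $Au-Av=(\varphi_v-\varphi_u)+(g_1-h_1)$. Using $\|x\|=f(x)$ for $x\in X_+$, the additivity of $f$, and $f(Au)=f(Av)=1$, $f(w)=\tau$, I find $\|g_1\|=f(g_1)=1-\tau+f(\varphi_u)\le 1-\tfrac{3\tau}{4}$, and likewise $\|h_1\|\le 1-\tfrac{3\tau}{4}$. The triangle inequality then gives
\[
\|Au-Av\|\le\|\varphi_u\|+\|\varphi_v\|+\|g_1\|+\|h_1\|\le\tfrac{\tau}{2}+2\Bigl(1-\tfrac{3\tau}{4}\Bigr)=2-\tau .
\]
Taking the supremum over all admissible $u,v$ yields $\delta_P(A)\le 1-\tau/2<1$, which is exactly condition (i) of the preceding Proposition with $n_0=1$, and the conclusion follows.

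The only genuinely delicate point is the identity $Qu=Qv$: it is precisely here that the hypothesis $Q\le P$ (and not merely that $Q$ is a Markov projection) is essential, since it converts the constraint $u-v\in N_P$ appearing in Remark \ref{SS1} into a single common minorant $w$, which in turn produces the uniform spectral gap $\tau/2$. The remaining ingredients — that the Cesàro average is Markov, that $\|\cdot\|$ is additive and agrees with $f$ on $X_+$, and the strong-space formula for $\delta_P$ — are routine, and the final appeal to the Proposition is immediate.
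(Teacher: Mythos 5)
Your proof is correct and follows essentially the same route as the paper: both arguments use $Q\le P$ together with $P(u-v)=0$ to produce the common minorant $\tau Qu=\tau Qv$, exploit positivity and the identity $\|x\|=f(x)$ on $X_+$ to bound the positive remainders by $1-\tfrac{3\tau}{4}$ (equivalently, the paper's $1-\tfrac{\tau}{2}$ after symmetrizing), and conclude $\delta_P(\mathcal{A}_{t_0}(\mathcal{T}))\le 1-\tfrac{\tau}{2}<1$ via the strong-space formula of Remark \ref{SS1}. Your four-term decomposition is only a cosmetic variant of the paper's symmetrized $\varphi_{xy}=\varphi_x+\varphi_y$ (which lets the $\varphi$-terms cancel outright), and your closing appeal to the Proposition with $n_0=1$ is, if anything, the more precise citation, since the paper's reference to Theorem \ref{UME} formally presupposes mean $Q$-ergodicity while the Proposition gives weak $P$-mean ergodicity directly from $\delta_P(\mathcal{A}_{t_0}(\mathcal{T}))<1$.
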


\begin{proof}
By condition $\frak{D}_P^{(m)}$, there are $\t\in (0,1]$, $t_0>0$ and $Q\leq P$ such that for any two elements
$x,y\in\ck$ with $x-y\in N_P$, one finds
$\f_{x}, \f_{y}\in X_{+}$ with
\begin{eqnarray}\label{Dm2}
\sup\limits_{x}\|\f_{x}\|\leq \frac{\t}{4}
\end{eqnarray}
such that
\begin{equation}\label{Dm3}
\ca_{t_0}(\ct)x+\f_{x}\geq\t Qx, \ \ \ca_{t_0}(\ct)y+\f_{y}\geq\t Qy.
\end{equation}

By putting $\f_{xy}=\f_{x}+\f_{x}$, from \eqref{Dm3} we obtain
\begin{equation}\label{Dm31}
\ca_{t_0}(\ct)x+\f_{xy}\geq\t Qx, \ \ \ca_{t_0}(\ct)y+\f_{xy}\geq\t Qy.
\end{equation}

By the Markovianity of $\mathcal T$, and \eqref{Dm31}, \eqref{Dm2},  one gets
 \begin{eqnarray}\label{Dm32}
 \| {\mathcal A}_{t_0} (\mathcal T) x + \phi_{xy} - \tau Qx \| &=& f( {\mathcal A}_{t_0} (\mathcal T) x + \phi_{xy} - \tau Qx)\nonumber \\
 &=& 1 - \tau +f (\phi_{xy})\nonumber\\
 &\leq& 1- \frac{\tau}{2}.
 \end{eqnarray}

By the same argument, we have
 \begin{eqnarray}\label{Dm33}
 \| {\mathcal A}_{t_0} (\mathcal T) y + \phi_{xy} - \tau Qy\|\leq 1- \frac{\tau}{2}.
 \end{eqnarray}

Due to $P(x-y)=0$ and $Q\leq P$, one gets $Qx=Qy$. Therefore, from \eqref{Dm32},\eqref{Dm33}, we obtain
\begin{eqnarray*}
\|\ca_{t_0}(\ct)x-\ca_{t_0}(\ct)y\|&=&\|\ca_{t_0}(\ct)x+\f_{xy}-\t Qx-(\ca_{t_0}(\ct)y+\f_{xy}-\t Qy)\|\\[2mm]
&\leq & \|\ca_{t_0}(\ct)x+\f_{xy}-\t Qx\|+\|\ca_{t_0}(\ct)y+\f_{xy}-\t Qy\|\\[2mm]
&\leq & 2\bigg(1-\frac{\t}{2}\bigg)
\end{eqnarray*}
this by Remark \ref{SS1} implies
$$
\d_P(\ca_{t_0}(\ct))\leq 1-\frac{\t}{2}<1.
$$
Hence, by Theorem \ref{UME}, we arrive at the assertion.
\end{proof}

Let us consider a simple example of discrete semigroup for which $\frak{D}_P^{(m)}$ is satisfied.

\begin{example} Let $M=M_2(\mathbb{C})$ be the algebra of $2\times 2$ matrices. By
$\sigma_1,\s_2,\s_3$ we denote the Pauli matrices, i.e.
\begin{equation*}
\s_1=\left(
      \begin{array}{cc}
        0 & 1 \\
        1 & 0 \\
      \end{array}
    \right) \
\s_2=\left(
       \begin{array}{cc}
         0 & -i \\
         i & 0 \\
       \end{array}
     \right) \
\s_3=\left(
       \begin{array}{cc}
         1 & 0 \\
         0 & -1 \\
       \end{array}
     \right).
\end{equation*}

It is known that the set $\{\id,\sigma_1,\s_2,\s_3\}$ forms a
basis for $M_2(\mathbb{C})$. Every matrix
$x\in\textit{M}_2(\mathbb{C})$ can be written in this basis as
$x=w_0\id+\wb\cdot\s$ with $w_0\in\mathbb{C},
\wb=(w_1,w_2,w_3)\in\mathbb{C}^3,$ here by $\wb\cdot\s$ we mean
the following $\wb\cdot\s=w_1\s_1+w_2\s_2+w_3\s_3.$  We note that $x$ is positive iff $\|\wb\|\leq |w_0|$.

Let us consider a mapping
$\Phi_{\l,\m,\kappa}: M_2(\mathbb{C})\rightarrow M_2(\mathbb{C})$ given by
\begin{equation}\label{ks4}
\Phi_{\l,\m,\kappa}(w_0\id+\wb\cdot\s)=w_0\id+\l w_1\s_1+\m w_2\s_2+\kappa w_3\s_3
\end{equation}
which is clearly a Markov operator (see \cite{M}) if and only if
$\max\{|\l|,|\m|,|\kappa|\}\leq 1$.

Let us consider the following Markov operators $\Phi:=\Phi_{-1,0,1}$, $P=\Phi_{0,0,1}$. It is evident that $P$ is a projection with
$P\Phi=\Phi P=P$. One can see that
$$
\Phi^n(w_0\id+\wb\cdot\s)=w_0\id+(-1)^n w_1\s_1+w_3\s_3
$$
which yields that $\Phi$ is not uniformly ergodic.
However,
$$
A_n(\Phi)=\frac{1}{n}\sum_{k=1}^n\Phi^k=w_0\id+w_3\s_3-\frac{\chi_{\bn^{odd}}(n)}{n} w_1\s_1
$$
where $\bn^{odd}$ is the set of odd natural numbers. This implies that
$A_n(\Phi)\to P$, so $\Phi$ is uniformly mean $P$-ergodic.

Take any $0<\tau<1$. Then from
\begin{equation}\label{AAF}
A_{n_0}(\Phi)(x)\geq \t Px, \ \ x\geq 0
\end{equation}
we infer that
$$
(1-\t)w_0\id +(1-\t)w_3\s_3-\frac{\chi_{\bn^{odd}}(n_0)}{n_0} w_1\s_1\geq 0
$$
which means
$$
w_0\id +w_3\s_3-\frac{\chi_{\bn^{odd}}(n_0)}{(1-\t)n_0} w_1\s_1\geq 0.
$$
If $(1-\t)n_0\geq 1$, then
\begin{eqnarray*}
|w_3|^3+\bigg|\frac{\chi_{\bn^{odd}}(n_0)}{(1-\t)n_0}\bigg|^2|w_1|^2\leq |w_3|^3+|w_1|^2\leq |w_0|^2
\end{eqnarray*}
here we have used the positivity of $x$.

Hence, if $n_0>1/(1-\t)$, than \eqref{AAF} is satisfied. This yields that the fulfilment of $\frak{D}_P^{(m)}$.
\end{example}


\bibliographystyle{amsplain}

\end{document}